\newtheorem{theorem}{Theorem}[section]
\newtheorem{proposition}{Proposition}[section]
\newtheorem{corollary}{Corollary}[section]
\newtheorem{example}{Example}[section]
\newtheorem{remark}{Remark}[section]
\newtheorem{question}{Question}[section]
\title{Dessins d'enfants and some holomorphic structures on the Loch Ness Monster}
\author[Y. Atarihuana, J. C. Garc\'{\i}a, R. A. Hidalgo, S. Quispe, C. Ram\'{\i}rez Maluendas]{Yasmina Atarihuana, Juan Garc\'{\i}a, Rub\'en A. Hidalgo, Sa\'ul Quispe, Camilo Ram\'{\i}rez Maluendas}
\address{Facultad de Ciencias, Universidad Central del Ecuador, Quito, Ecuador}
\email{yfatarihuana@uce.edu.ec, jcgarcian@uce.edu.ec}
\address{Departamento de Matem\'atica y Estad\'{\i}stica, Universidad de La Frontera, Temuco, Chile}
\email{ruben.hidalgo@ufrontera.cl, saul.quispe@ufrontera.cl}
\address{Departamento de Matem\'atica y Estad\'istica, Universidad Nacional de Colombia, Sede Manizales. Manizales, Colombia}
\email{camramirezma@unal.edu.co}
\thanks{Partially supported by Projects FONDECYT 1190001 and 11170129, and Project HERMES 47019}
\keywords{Riemann surfaces, Automorphisms, Dessins d'enfants, Loch Ness monster, Non-compact surfaces, infinite hyperelliptic curves} 
\subjclass[2000]{37F10, 14H37, 14H57, 20H10, 57N05, 57N16}
\begin{document}

\begin{abstract}
The classical theory of dessin d'enfants, which are bipartite maps on compact orientable surfaces, are combinatorial objects used to study branched covers between compact Riemann surfaces and the absolute Galois group of the field
of rational numbers. In this paper, we show how this theory is naturally extended to non-compact orientable surfaces and, in particular, we observe that  the Loch Ness monster (the surface of infinite genus with exactly one end) admits infinitely many regular dessins d'enfants (either chiral or reflexive).  In addition, we study different holomorphic structures on the Loch Ness monster, which come from homology covers of compact Riemann surfaces, infinite hyperelliptic and infinite superelliptic curves. 
\end{abstract}

\maketitle

\section{Introduction}
In this paper, a surface will mean a (possible non-compact) second countable, connected and orientable $2$-manifold without boundary. A Riemann surface structure on a surface corresponds to a (maximal) holomorphic atlas (its local charts take their values in the complex plane $\mathbb{C}$ and have biholomorphic transition functions where their domains overlap). We will use $X$ to denote a surface and $S$ to denote a Riemann surface.

A dessin d'enfant corresponds to a bipartite map on a compact surface. These objects were studied as early as the nineteenth century and rediscovered by Grothendieck in the twentieth century in his ambitious research outline \cite{Gro}. 
Each dessin d'enfant defines (up to isomorphisms) a unique Riemann surface structure $S$ together with a non-constant holomorphic branched cover $\beta:S \to \widehat{\mathbb C}$ whose branched value set is a subset of $\{\infty,0,1\}$ (called a Belyi map on $S$). Conversely, every Belyi map $\beta:S \to \widehat{\mathbb C}$ comes from a suitable dessin d'enfant.

The goal of this paper is to focus on the generalization of the theory of dessins d'enfants to non-compact surfaces, and the study of certain holomorphic structures on the Loch Ness monster (LNM) generalizing cyclic gonal curves (hyperelliptic, superelliptic).

From the point of view of Ker\'ekj\'art\'o's classification theorem of non-compact surfaces (see e.g., \cites{Ker, Ian}), the topological type of any surface $X$ (recall that we are assuming no boundary) is given by: (i) its genus $g\in \mathbb{N}\cup \{\infty\}$ and (ii) a couple of nested, compact, metrizable and totally disconnected spaces ${\rm Ends}_{\infty}(X)\subset {\rm Ends}(X)$. The set ${\rm Ends}(X)$ (respectively, ${\rm Ends}_{\infty}(X)$) is known as the ends space (respectively, the non-planar ends space) of $X$. Of all non-compact surfaces we focus is the LNM, which is the unique infinite genus surface with exactly one end.

In Section 2, we collect the principal tools to understand the classification of non-compact surfaces theorem. Also, we explore the concept of ends on groups.
In Sections 3, 4 and 5, we provide an introduction to the theory of dessins d'enfants, Belyi pairs for a general surface (compact or not) and a characterisation of dessins d'enfants in terms of Belyi pairs and triangle groups. In Section 6, we describe infinitely many regular dessins d'enfants (either chiral or reflexive) on the LNM, by observing that it is the homology cover of any compact Riemann surface of genus $g \geq 2$.  We give a precise description of an infinite set of generators of a Fuchsian group $\Gamma$, such that the quotient $\mathbb{H}/\Gamma$ is the LNM. We finally discuss on the explicit representation of the LNM as a smooth infinite hyperelliptic and superelliptic curves. For the infinite hyperelliptic curves we also describe the corresponding moduli space.


\section{The ends space} 
In this section, we shall recall the concepts of ends of topological spaces, surfaces and groups (see \cite{Fre}*{1. Kapitel}).

\subsection{The ends space of a topological space}\label{subsection:ends}  Let $Z$ be a locally compact, locally connected, connected, and Hausdorff space. 

A sequence $(U_n)_{n\in\mathbb{N}}$, of non-empty connected open subsets of $Z$, is called {\it nested} if the following conditions hold:
\begin{enumerate}
\item $U_{1}\supset U_{2}\supset\ldots$

\item  for each $n\in\mathbb{N}$ the boundary $\partial U_n$ of $U_{n}$ is compact, 

\item the intersection $\cap_{n\in\mathbb{N}}\overline{U_{n}}=\emptyset
$, and

\item for each compact  $K\subset Z$ there is $m\in\mathbb{N}$ such that $K\cap U_m =\emptyset$.
\end{enumerate}

Two nested sequences $(U_n)_{n\in\mathbb{N}}$ and $(U'_{n})_{n\in\mathbb{N}}$ are {\it equivalent} if for each $n\in\mathbb{N}$ there exist $j,k\in\mathbb{N}$ such that $U_{n}\supset U'_{j}$, and $U'_{n}\supset U_{k}$. We denote by the symbol $[U_{n}]_{n\in\mathbb{N}}$ the equivalence class of $(U_n)_{n\in\mathbb{N}}$ and we called it an {\it end} of $Z$.  

The {\it ends space} of $Z$ is the topological space, having the ends of $Z$ as elements, and endowed with the topology generated by the basis
\begin{equation*}
U^{*}:=\{[U_{n}]_{n\in\mathbb{N}}\in{\rm Ends}(Z)\hspace{1mm}|\hspace{1mm}U_{j}\subset U\hspace{1mm}\text{for some }j\in\mathbb{N}\},
\end{equation*}
where $U \subset Z$ is a non-empty open subset whose boundary $\partial U$ is compact.

\begin{theorem}[\cite{Ray}*{Theorem 1.5}]
The ends space ${\rm Ends}(Z)$, with the above topology, is Hausdorff, totally disconnected,
and compact.
\end{theorem}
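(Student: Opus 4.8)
The plan is to derive all three properties from a single structural observation: for every open set $U\subset Z$ with compact boundary, the basic set $U^{*}$ is \emph{clopen} in ${\rm Ends}(Z)$, and every end lies in exactly one of $U^{*}$ and $(Z\setminus\overline{U})^{*}$. To prove this dichotomy, fix an end $[U_n]_{n\in\mathbb N}$ and note that, since $\partial U$ is compact, property (4) of a nested sequence gives an $m$ with $U_m\cap\partial U=\emptyset$. As $Z=U\cup\partial U\cup(Z\setminus\overline U)$ is a partition into the open set $U$, the compact frontier $\partial U$, and the open exterior $Z\setminus\overline U$, the connected set $U_m$ must lie entirely in $U$ or entirely in $Z\setminus\overline U$; passing to tails places the end in $U^{*}$ in the first case and in $(Z\setminus\overline U)^{*}$ in the second. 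The two sets are disjoint, since a common end would produce indices with $U_a\subset U$ and $U_b\subset Z\setminus\overline U$, whence $U_c\subset U\cap(Z\setminus\overline U)=\emptyset$ for $c\geq a,b$, contradicting $U_c\neq\emptyset$. Because $\partial(Z\setminus\overline U)\subset\partial U$ is compact, $(Z\setminus\overline U)^{*}$ is also basic, so $U^{*}$ and its complement are both open, i.e. $U^{*}$ is clopen.

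For the Hausdorff and total-disconnectedness properties I would use this dichotomy directly. Take distinct ends $e=[U_n]_{n\in\mathbb N}$ and $e'=[U'_n]_{n\in\mathbb N}$. If we had $e\in (U'_n)^{*}$ and $e'\in (U_n)^{*}$ for every $n$, then for each $n$ some $U_j\subset U'_n$ and some $U'_k\subset U_n$, which is exactly the definition of equivalence and forces $e=e'$. Hence, after possibly interchanging the ends, there is an index $n_0$ with $e'\notin (U_{n_0})^{*}$; by the dichotomy $e'\in (Z\setminus\overline{U_{n_0}})^{*}$, while $e\in (U_{n_0})^{*}$, and these are disjoint open neighborhoods. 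The same clopen set $U_{n_0}^{*}$ contains $e$ but not $e'$, so any connected subset containing both would split into two nonempty relatively clopen pieces; therefore the connected components are singletons and ${\rm Ends}(Z)$ is totally disconnected.

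Compactness is the substantive point. Here I would fix a compact exhaustion $K_1\subset\operatorname{int}K_2\subset K_2\subset\cdots$ with $\bigcup_n K_n=Z$ (available since $Z$ is locally compact, locally connected and $\sigma$-compact, as in the paper's second-countable setting), each $K_n$ having compact frontier. For each $n$ let $\mathcal{C}_n$ denote the set of connected components of $Z\setminus K_n$ with noncompact closure; these components are open by local connectedness and have frontier contained in $\partial K_n$. The inclusions $Z\setminus K_{n+1}\subset Z\setminus K_n$ induce maps $\mathcal{C}_{n+1}\to\mathcal{C}_n$, and I would check that sending an end $[U_m]_{m\in\mathbb N}$ to the coherent sequence of components eventually containing its tails is a well-defined bijection ${\rm Ends}(Z)\to\varprojlim_n\mathcal{C}_n$ that carries the basis $U^{*}$ onto the cylinder sets of the inverse limit, hence a homeomorphism. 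If each $\mathcal{C}_n$ is finite and discrete, then $\varprojlim_n\mathcal{C}_n$ is a closed subspace of the compact product $\prod_n\mathcal{C}_n$ and is therefore compact, giving compactness of ${\rm Ends}(Z)$.

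The one genuinely delicate step, and the main obstacle, is the finiteness of each $\mathcal{C}_n$. I would attack it by covering the compact frontier $\partial K_n$ with finitely many connected open sets $O_1,\dots,O_r$ of compact closure, provided by local compactness and local connectedness. Every component $C\in\mathcal C_n$ has nonempty frontier (otherwise $C$ would be clopen and equal to $Z$ by connectedness, contradicting $C\subset Z\setminus K_n$), and $\partial C\subset\partial K_n\subset\bigcup_i O_i$; since $C$ is open, each $O_i$ meeting $\partial C$ already meets $C$. Thus every component of $\mathcal{C}_n$ meets the finite collar $\bigcup_i O_i$, and the local structure near $\partial K_n$ bounds the number of components that can do so, forcing $\mathcal{C}_n$ to be finite. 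This is precisely where local connectedness is indispensable; in the surface setting of interest it becomes transparent, as $\partial K_n$ may be taken to be a finite union of disjoint simple closed curves.
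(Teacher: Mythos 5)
The paper itself offers no proof of this statement---it is imported verbatim from \cite{Ray}*{Theorem 1.5}---so your proposal can only be measured against the standard argument, which is indeed the route you take. Your first two paragraphs are correct and complete: the dichotomy that every end lies in exactly one of $U^{*}$ and $(Z\setminus\overline{U})^{*}$ (making every basic set clopen), the observation that mutual membership in all basic neighborhoods $(U_n)^{*}$, $(U'_n)^{*}$ is literally the definition of equivalence of nested sequences, and the resulting Hausdorff and total-disconnectedness properties are all handled correctly; note also that $(Z\setminus\overline{U})^{*}$ is legitimately basic because the paper's basis does not require $U$ to be connected.

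The compactness part, however, has a genuine gap, sitting exactly at the step you yourself flag as the main obstacle: finiteness of the set $\mathcal{C}_n$ of components of $Z\setminus K_n$ with noncompact closure. What your collar argument proves is only that every such component meets $\bigcup_i O_i$, where the $O_i$ cover $\partial K_n$. That does not bound the number of components: each $O_i$ contains points of $\partial K_n\subset K_n$, so $O_i$ is \emph{not} contained in $Z\setminus K_n$, need not lie inside any single component, and can in fact meet infinitely many components---take $Z=\mathbb{R}^2$ and $K$ a Hawaiian earring, where every neighborhood of the wild point meets infinitely many complementary components. So the assertion ``the local structure near $\partial K_n$ bounds the number of components that can do so'' is precisely what remains unproven, and as phrased it is false for components in general; noncompactness of the closure must be used a second time. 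The standard repair: since $K_n\subset\operatorname{int}K_{n+1}$, every $C\in\mathcal{C}_n$ has $\partial C\subset\partial K_n\subset\operatorname{int}K_{n+1}$, hence $C$ meets $\operatorname{int}K_{n+1}$; since $\overline{C}$ is noncompact, $C\not\subset K_{n+1}$, so connectedness forces $C\cap\partial K_{n+1}\neq\emptyset$. Now cover the compact set $\partial K_{n+1}$ (not $\partial K_n$) by finitely many connected open sets \emph{contained in} $Z\setminus K_n$ (possible because $\partial K_{n+1}\cap K_n=\emptyset$ and $Z$ is locally connected); each of these lies in exactly one component of $Z\setminus K_n$, and each $C\in\mathcal{C}_n$ contains one of them, so $\lvert\mathcal{C}_n\rvert$ is bounded by the number of covering sets. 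With this lemma in place, your inverse-limit identification does go through (the key point, worth writing out, is that $\partial U$ compact forces $\partial U\subset\operatorname{int}K_m$ for some $m$, whence any component of $Z\setminus K_m$ meeting $U$ but not $\partial U$ lies entirely in $U$). Finally, be aware that a compact exhaustion exists only under $\sigma$-compactness, which is not among the hypotheses on $Z$ stated in the theorem; this is harmless for the paper's applications (second countable surfaces, Cayley graphs of finitely generated groups) and matches Raymond's setting, but it should be declared as an assumption rather than imported silently.
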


\subsection{Ends of a surface} 
Let us now consider a surface $X$ and its ends space.

By a \emph{subsurface} of $X$ we mean an embedded surface, which is a closed subset of $X$ and whose boundary consists of a finite number of nonintersecting simple closed curves. Note that a subsurface it might or not be compact.
The {\it reduced genus} of a compact subsurface $Y$,  with $q(Y)$ boundary curves and Euler characteristic $\chi(Y)$, is the number $g(Y)=1-\frac{1}{2}(\chi(Y)+q(Y))$. 
The {\it genus} of the surface $X$ is the supremum of the genera of its compact subsurfaces; so it can be an non-negative integer or $\infty$. 
The surface $X$ is said to be {\it planar} if it has genus zero (in other words, $X$ is homeomorphic to an open of the complex plane). 

\begin{remark}
In this case,  in the definition of ends given in section \ref{subsection:ends}, we may assume the sequence $(U_{n})_{n\in\mathbb{N}}$, such that the clousures $\overline{U}_{n}$ are subsurfaces. In this setting, an end $[U_n]_{n\in\mathbb{N}}$ of a surface $X$ is called {\it planar} if there is $l\in\mathbb{N}$ such that the subsurface $\overline{U}_l\subset X$ is planar.
\end{remark}

 Hence, we define the subset ${\rm Ends}_{\infty}(X)$ of ${\rm Ends}(X)$, conformed by all ends of $X$, which are not planar ({\it ends having infinity genus}). It follows directly from the definition that ${\rm Ends}_{\infty}(X)$ is a closed subset of ${\rm Ends}(X)$ (see \cite{Ian}*{p. 261}) and that $(g,{\rm Ends}_{\infty}(X),{\rm Ends}(X))$, where $g$ is the genus of $X$, is a topological invariant.

\begin{theorem}[Classification of non-compact surfaces, \cite{Ker}*{\S 7, Erster Abschnitt}, \cite{Ian}*{Theorem 1}]
Two surfaces $X_1$ and $X_2$  having the same genus are topological equivalent if and only if there exists a homeomorphism $f: {\rm Ends}(X_1)\to {\rm Ends}(X_2)$ such that $f( {\rm Ends}_{\infty}(X_1))= {\rm Ends}_{\infty}(X_2)$.
\end{theorem}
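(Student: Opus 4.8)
The plan is to prove the two implications separately, with essentially all of the work lying in the ``if'' direction. For the ``only if'' direction, suppose $\varphi\colon X_1\to X_2$ is a homeomorphism. Then $\varphi$ carries nested sequences to nested sequences and respects the equivalence relation defining ends, so it induces a homeomorphism $f\colon{\rm Ends}(X_1)\to{\rm Ends}(X_2)$ of the ends spaces (with respect to the clopen basis $U^{*}$, since $\varphi$ sends open sets with compact boundary to open sets with compact boundary). Because $\varphi$ restricts to a homeomorphism between subsurfaces and the reduced genus is a homeomorphism invariant, $\varphi$ sends planar subsurfaces to planar subsurfaces; hence an end of $X_1$ is planar if and only if its image under $f$ is planar, which gives $f({\rm Ends}_\infty(X_1))={\rm Ends}_\infty(X_2)$.

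For the ``if'' direction, fix a homeomorphism $f$ as in the statement and assume $X_1,X_2$ have the same genus $g$. I would first choose, for each $i\in\{1,2\}$, an \emph{exhaustion} $K_1^{(i)}\subset K_2^{(i)}\subset\cdots$ of $X_i$ by compact connected subsurfaces with $X_i=\bigcup_n K_n^{(i)}$, arranged so that each connected piece of $X_i\setminus K_n^{(i)}$ has compact boundary and hence a well-defined clopen set of ends; these clopen sets recover a neighborhood basis of ${\rm Ends}(X_i)$. Since ${\rm Ends}(X_i)$ is compact, totally disconnected and metrizable (by the Theorem quoted above), it admits finite clopen partitions of arbitrarily small mesh, and a homeomorphism between two such spaces can be approximated, level by level, by bijections between the blocks of matching partitions. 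The core step is to refine and reindex the two exhaustions simultaneously so that $f$ induces, at each level $n$, a bijection between the pieces of $X_1\setminus K_n^{(1)}$ and those of $X_2\setminus K_n^{(2)}$, matching their clopen sets of ends and, crucially, their clopen sets of \emph{non-planar} ends.

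With the exhaustions aligned, I would build the homeomorphism on the successive ``shells'' $A_n^{(i)}=\overline{K_{n+1}^{(i)}\setminus K_n^{(i)}}$. Each shell is a compact bordered surface, determined up to homeomorphism by its genus and by the combinatorics of how its boundary circles separate inner from outer pieces; using the classification of compact bordered surfaces, one constructs a homeomorphism $A_n^{(1)}\to A_n^{(2)}$ agreeing on the already-matched boundary circles, and then glues these maps along the common boundaries to obtain $\varphi\colon X_1\to X_2$. Continuity of $\varphi$ and of $\varphi^{-1}$ at the ends follows from property $(4)$ of the nested sequences, since the shells eventually leave every compact set. I expect the main obstacle to be the bookkeeping in the alignment step: one must distribute the (possibly infinite) genus across the shells so that each matched pair of shells carries the same finite genus, and so that genus accumulates along an end of $X_1$ exactly when it accumulates along the $f$-image end of $X_2$. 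This is precisely where the hypothesis $f({\rm Ends}_\infty(X_1))={\rm Ends}_\infty(X_2)$ together with equality of total genus is used, and ensuring the accumulation matches up for ends of infinite genus (rather than merely matching finite partial sums) is the delicate point.
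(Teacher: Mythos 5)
The paper itself offers no proof of this theorem: it is imported verbatim from the literature, with citations to Ker\'ekj\'art\'o and to Richards \cite{Ian}, so there is no in-paper argument to compare yours against. Your sketch therefore has to be measured against the classical proof, and it does follow the standard strategy (the ``only if'' direction as you give it is correct and complete in outline; the content is all in the ``if'' direction, as you say).

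The difficulty is that what you label the ``delicate point'' --- refining and interleaving the two exhaustions so that matched shells carry equal finite genus, matched complementary pieces carry corresponding clopen sets of ends, and genus accumulates along an end of $X_1$ exactly when it accumulates along its $f$-image --- is not bookkeeping; it is the entire theorem, and your proposal defers rather than executes it. Two concrete obstructions you would hit: (i) when aligning shells you can always \emph{enlarge} a compact subsurface of $X_2$ to raise the genus trapped in a shell, but you can never lower it, so the alignment requires a back-and-forth (zig-zag) construction in which the exhaustions of $X_1$ and $X_2$ are chosen alternately, each level absorbing the excess of the previous one; nothing in your outline sets this up. (ii) Matching ``clopen sets of ends'' of complementary pieces to the blocks of a partition of ${\rm Ends}(X_i)$ requires knowing that every clopen subset of the end space is realized by a complementary component of some compact subsurface, which itself needs an argument. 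It is worth knowing that Richards' actual proof sidesteps much of this: instead of aligning two arbitrary exhaustions against each other, he shows every surface of genus $g$ with end pair $({\rm Ends}_\infty(X),{\rm Ends}(X))$ is homeomorphic to a \emph{canonical} surface --- a sphere minus a closed totally disconnected set realizing ${\rm Ends}(X)$, with handles attached in a sequence accumulating exactly on ${\rm Ends}_\infty(X)$ --- and then observes that a homeomorphism of end pairs induces a homeomorphism of canonical surfaces essentially for free. That normalization buys exactly the bookkeeping your direct shell-matching approach leaves open, so if you want to complete your argument, either carry out the zig-zag alignment in full or reduce to canonical forms as Richards does.
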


\subsection{Loch Ness monster (\cite{Val}*{Definition 2})}\label{d:loch_nesss_monster}
 The {\it Loch Ness monster}\footnote{From the historical point of view as shown in \cite{AyC}, this
nomenclature is due to A. Phillips and D. Sullivan \cite{PSul}.} (LNM) is the unique, up to homeomorphisms, surface of infinite genus and exactly one end (see Figure \ref{F:LMN1}).
 \begin{figure}[h!]
\begin{center}
\includegraphics[scale=0.3]{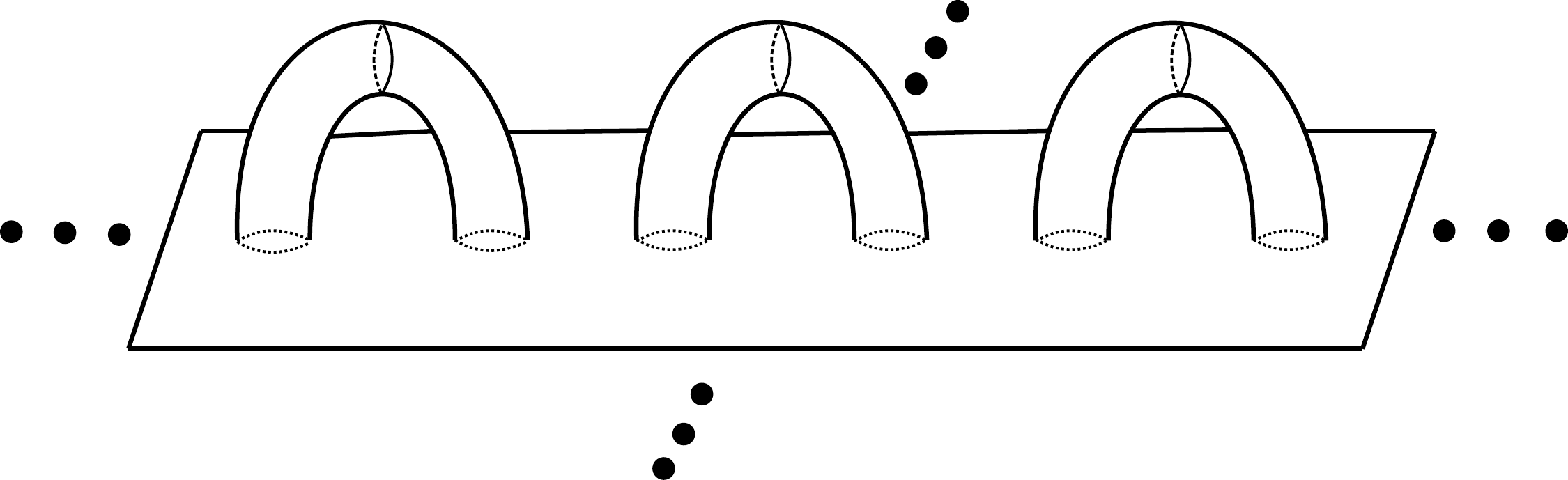}\\
  \caption{\emph{The Loch Ness Monster}}
   \label{F:LMN1}
\end{center}
\end{figure}

\begin{remark}[\cite{SPE}*{\S 5.1., p. 320}] The surface $X$ has one end if and only if for all compact subset $K \subset X$ there is a compact $K^{'}\subset X$ such that $K\subset K^{'}$ and $X\setminus  K^{'}$ is connected.
\end{remark}

\subsection{Ends of a group} 
Let $G$ be a finitely generated group and let ${\mathcal C} \subset G$
be a finite generating set (closed under inversion).  The {\it Cayley graph of $G$, with respect to ${\mathcal C}$}, is the graph ${\rm Cay}(G,{\mathcal C})$ whose vertices are the elements of $G$ and, there is an edge with ends points $g_1$ and $g_2$ if and only if there is an element $h\in {\mathcal C}$ such that $g_1h=g_2$. 

It is known that the Cayley graph\footnote{For us ${\rm Cay}(G,{\mathcal C})$ will be the geometric realization of an abstract graph (see \cite{Diestel2}*{p.226}).} ${\rm Cay}(G,{\mathcal C})$ is locally compact, locally connected, connected, Hausdorff space. In particular, one may consider the ends space of ${\rm Cay}(G,{\mathcal C})$.

\begin{proposition}[\cite{Loh}*{Proposition 8.2.5}] 
Let $G$ be a finitely generated group. The ends space of the Cayley graph of $G$ does not depend on the choice of the finite genereting set.
\end{proposition}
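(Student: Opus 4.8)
The plan is to show that the ends space is a quasi-isometry invariant, and then invoke the standard fact that any two finite generating sets give quasi-isometric Cayley graphs. Concretely, I would proceed in three stages. First I would recall (or prove) that if $\mathcal{C}$ and $\mathcal{C}'$ are two finite generating sets of $G$, then the identity map on $G$ extends to a quasi-isometry $\mathrm{Cay}(G,\mathcal{C}) \to \mathrm{Cay}(G,\mathcal{C}')$. This is elementary: each generator in $\mathcal{C}$ can be written as a word of bounded length in $\mathcal{C}'$ and vice versa, so the word metrics $d_{\mathcal{C}}$ and $d_{\mathcal{C}'}$ are bi-Lipschitz equivalent on $G$, with multiplicative constant $\lambda = \max\{\ell_{\mathcal{C}'}(h), \ell_{\mathcal{C}}(h') : h \in \mathcal{C}, h' \in \mathcal{C}'\}$; passing from the vertex set to the geometric realization only changes distances by an additive constant, so the resulting map is a genuine quasi-isometry.

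Second, I would prove that the ends space, as defined in Subsection \ref{subsection:ends}, is invariant under quasi-isometry of the underlying spaces. The natural route is to reformulate ends in the coarse-geometric language: for a proper geodesic (or length) space $Z$, an end is an equivalence class of the inverse system $\{\pi_0(Z \setminus K)\}_K$ indexed by compact sets $K$, ordered by inclusion, and $\mathrm{Ends}(Z) = \varprojlim \pi_0(Z \setminus K)$ with the inverse-limit topology. One checks this description agrees with the nested-sequence definition given in the text. A quasi-isometry $f : Z \to Z'$ sends coarsely connected unbounded components to coarsely connected unbounded components and, because it is a coarse equivalence, induces a bijection on the connected components of complements of large balls that is compatible with the inclusion maps; taking the inverse limit then yields a homeomorphism $\mathrm{Ends}(f) : \mathrm{Ends}(Z) \to \mathrm{Ends}(Z')$. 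Here I would use that both Cayley graphs are proper geodesic spaces so that compact sets and bounded sets coincide up to the relevant equivalence.

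Third, I would combine the two stages: the quasi-isometry from the first step induces, via the second step, a homeomorphism $\mathrm{Ends}(\mathrm{Cay}(G,\mathcal{C})) \cong \mathrm{Ends}(\mathrm{Cay}(G,\mathcal{C}'))$, which is exactly the claimed independence from the generating set.

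The main obstacle will be the second stage, namely making precise that a quasi-isometry induces a well-defined map on ends. A quasi-isometry does not map components of $Z \setminus K$ to components of $Z' \setminus K'$ on the nose; one must thicken sets by the quasi-isometry constants and verify that two unbounded sequences of regions are cofinally nested in $Z$ precisely when their images (suitably thickened) are cofinally nested in $Z'$. Controlling this bookkeeping — especially checking that the induced correspondence is a well-defined bijection independent of the choices of thickening radii, and that it is continuous with continuous inverse for the end topologies — is where the real work lies. Since the cited reference \cite{Loh}*{Proposition 8.2.5} establishes exactly this, in the write-up I would either reproduce this quasi-isometry-invariance argument in the detail above or simply cite it, depending on how self-contained the section is meant to be.
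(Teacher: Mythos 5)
Your proposal is correct and takes essentially the same route as the paper: the paper gives no independent argument but simply cites L\"oh's Proposition 8.2.5, whose content is exactly what you outline --- different finite generating sets yield bi-Lipschitz equivalent word metrics (hence quasi-isometric Cayley graphs), and the ends space of a proper geodesic space is a quasi-isometry invariant. The thickening and inverse-limit bookkeeping you flag as the main obstacle is precisely what the cited reference carries out, so citing it (or reproducing that argument) finishes the proof as you describe.
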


As a consequence of the above fact, one may define 
the {\it ends space of $G$} as ${\rm Ends}(G):={\rm Ends}({\rm Cay}(G,{\mathcal C}))$.

\begin{theorem}[\cite{ScottWall}*{Lemmas 5.6, 5.7, and Corollary 5.9}]
Let $G$ be a finitely generated group. Then
\begin{enumerate}
    \item $G$ has either zero, one, two or infinitely many ends.
    \item If $K$ is a finite index subgroup of $G$, then both groups have the same number of ends.
    \item If $K$ is a finite normal subgroup of $G$, then groups $G$ and $G/K$ have the same number of ends.
\end{enumerate}
\end{theorem}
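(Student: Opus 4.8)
The plan is to treat all three statements uniformly through the geometry of the Cayley graph $\Gamma = \mathrm{Cay}(G,\mathcal{C})$, using the preceding Proposition to know that the number of ends does not depend on the chosen finite generating set. Throughout I would work with the following concrete model of $\mathrm{Ends}(\Gamma)$: for a finite subgraph $D \subset \Gamma$ let $N(D)$ be the number of infinite connected components of $\Gamma \setminus D$; then the number of ends $e(G)$ equals the supremum of $N(D)$ over all finite subgraphs $D$. The key structural feature is that $G$ acts on $\Gamma$ by graph automorphisms, freely and simply transitively on vertices, so every translate $gD$ is again a finite subgraph realizing the same separation pattern as $D$. This homogeneity is what drives the entire argument.

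For part (1) I would first dispose of the extreme case: $e(G)=0$ is equivalent to $\Gamma$ being finite, i.e.\ to $G$ finite, and otherwise $e(G)\ge 1$. The heart of the statement is to rule out a finite value $e(G)=e\ge 3$. Assume $e$ is finite; then the supremum is attained, say by a finite subgraph $D_0$ with $N(D_0)=e$, and $N(D)=e$ for every finite $D\supseteq D_0$. Using vertex-transitivity I would choose $g\in G$ so that the translate $gD_0$ lies deep inside one infinite complementary component $C$ of $\Gamma\setminus D_0$ and is disjoint from $D_0$. Since $gD_0$ separates $\Gamma$ into $e$ infinite pieces exactly as $D_0$ does, removing $D_0\cup gD_0$ leaves the $e-1$ infinite components of $\Gamma\setminus D_0$ other than $C$ (untouched, as $gD_0\subset C$), together with the $e-1$ infinite components of $\Gamma\setminus gD_0$ that are separated from $D_0$ and hence contained in $C$. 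Thus $N(D_0\cup gD_0)\ge 2e-2$. The decisive inequality is $2e-2>e \iff e>2$: for $e\ge 3$ this contradicts the maximality of $e$, whereas for $e=2$ one only gets $2e-2=e$ and no contradiction, which is precisely why two ends can persist. Hence $e(G)\in\{0,1,2,\infty\}$. I expect the \emph{main obstacle} to be the bookkeeping in this counting step: verifying that the translated separator genuinely contributes $e-1$ fresh infinite components rather than reconnecting some of the existing ones. This is exactly where one needs $gD_0$ to sit far enough inside $C$, a condition controlled by the word metric together with the finiteness of $D_0$.

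For parts (2) and (3) I would instead invoke the quasi-isometry invariance of the ends space: a proper geodesic space has an ends space depending only on its quasi-isometry type, and Cayley graphs of finitely generated groups are proper geodesic spaces, so it suffices to exhibit quasi-isometries. For (2), if $K\le G$ has finite index then there are only finitely many cosets, so $K$ is coarsely dense in $\Gamma$ and the inclusion induces a quasi-isometry between a Cayley graph of $K$ and $\Gamma$; hence $\mathrm{Ends}(K)\cong\mathrm{Ends}(G)$ and the two groups have the same number of ends. For (3), if $K\trianglelefteq G$ is finite then the quotient homomorphism $G\to G/K$ is surjective with all fibres of the fixed finite size $|K|$ and changes the word metric by at most a bounded additive error, so it is a quasi-isometry; therefore $G$ and $G/K$ have the same ends space and the same number of ends. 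These two parts are routine once the quasi-isometry invariance of ends is granted, the only point needing attention being that the maps in question are genuine quasi-isometries, which follows directly from $[G:K]<\infty$ and $|K|<\infty$ respectively.
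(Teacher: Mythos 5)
The paper never proves this theorem: it is imported verbatim from Scott--Wall as background material, so there is no internal argument to compare yours against. Judged on its own merits, your proposal follows the standard geometric route (the Freudenthal--Hopf translation argument for the trichotomy, quasi-isometry invariance of ends for the other two parts) and is essentially sound. One remark on economy: parts (2) and (3) can be routed through a result the paper \emph{does} quote, namely Proposition \ref{pro:acts_properly} (\v{S}varc--Milnor). For (2), the subgroup $K$ acts properly and cocompactly on ${\rm Cay}(G,\mathcal{C})$ precisely because $[G:K]<\infty$; for (3), the group $G$ acts properly and cocompactly on a Cayley graph of $G/K$ through the quotient map precisely because the kernel $K$ is finite. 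In both cases the proposition identifies the number of ends of the acting group with that of the space, which equals the number of ends of $G$ (respectively $G/K$). This is equivalent to your explicit quasi-isometries but confines the black box to a single result already available in the paper.

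In part (1) there is one step whose justification, as written, does not go through: you assert that the $e-1$ infinite components of $\Gamma\setminus gD_0$ other than the one containing $D_0$ are ``separated from $D_0$ and hence contained in $C$''. Disjointness from $D_0$ alone does not yield containment in $C$; worse, if $D_0$ is disconnected it may meet \emph{several} components of $\Gamma\setminus gD_0$, leaving fewer than $e-1$ fresh ones, and your proposed remedy (pushing $gD_0$ far into $C$) does not by itself repair this. The missing ingredient is connectedness, not distance: first enlarge $D_0$ to a \emph{connected} finite subgraph, which is harmless since $N$ is monotone under enlargement (as you note) and $e$ is the maximum, so the enlarged set still realizes $N=e$. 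Then (i) $gD_0$ is connected, so once it is disjoint from $D_0$ and meets $C$ it lies entirely in $C$, and all components of $\Gamma\setminus D_0$ other than $C$ survive untouched; (ii) $D_0$, being connected and disjoint from $gD_0$, lies in a single component $C'$ of $\Gamma\setminus gD_0$, so exactly $e-1$ of the infinite components of $\Gamma\setminus gD_0$ avoid $D_0$; and (iii) each such component $B$ is genuinely new, for if $B$ coincided with a component $A\neq C$ of $\Gamma\setminus D_0$, then the edge joining $A$ to $D_0$ (which exists since $\Gamma$ is connected) together with $D_0\cap gD_0=\emptyset$ would contradict the maximality of $B$ as a connected subset of $\Gamma\setminus gD_0$. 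With this normalization your count $N(D_0\cup gD_0)\geq 2e-2>e$ for $e\geq 3$ is correct, and the rest of the argument stands.
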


The following result asserts that $G$ has more than one end if and only if $G$ splits over a finite subgroup.

\begin{theorem}[\cite{Stallings68}*{p. 312}, \cite{Stallings71}*{\S 4.A.6, p. 38}, \cite{ScottWall}*{Theorem 6.1}]\label{unnfinal}
Let $G$ be a finitely generated group.
\begin{enumerate}
\item If $G$ has infinitely many ends, then one of the following hold.
\begin{enumerate}
    \item If $G$ is torsion-free, then $G$ is  a non-trivial free product.
    \item If $G$ has torsion, then $G$ is a non-trivial free product with amalgamation, with finite amalgamated subgroup.
\end{enumerate}
\item The following are equivalent.
\begin{enumerate}
    \item $G$ has two ends.
    \item $G$ has a copy of ${\mathbb Z}$ as a finite index subgroup.
    \item $G$ has a finite normal subgroup $N$ with $G/N$ isomorphic to either ${\mathbb Z}$ or ${\mathbb Z}_{2} * {\mathbb Z}_{2}$.
    \item Either $G=F*_{F}$, where $F$ is a finite group, or $G=A*_{F}B$, where $F$ is a finite group and $[A:F]=[B:F]=2$.
\end{enumerate}
\end{enumerate}
\end{theorem}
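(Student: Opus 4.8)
The plan is to take as the central input the structural dichotomy already foreshadowed in the sentence preceding the statement, namely \emph{Stallings' ends theorem}: a finitely generated group $G$ has more than one end if and only if $G$ admits a nontrivial splitting over a finite subgroup, i.e. $G \cong A *_F B$ with $F$ finite and $[A:F],[B:F]\geq 2$, or $G$ is an HNN extension $A *_F$ with $F$ finite and properly contained in $A$. Equivalently, $G$ acts without inversions on a simplicial tree $T$ with finite edge stabilizers and no global fixed point. Almost everything else is then bookkeeping over this decomposition, so I would foreground this theorem and treat its own proof (via Dunwoody's theory of tracks and minimal almost-invariant sets, or the cohomological approach of Stallings and Swan) as the genuine hard part to be imported rather than reproven.

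Granting the splitting theorem, I would prove part (1) as follows. If $G$ has infinitely many ends it has more than one end, so it splits over a finite subgroup and acts on a tree $T$ as above. When $G$ is torsion-free every finite subgroup is trivial, hence all edge stabilizers are trivial; by Bass--Serre theory an action with trivial edge groups and no global fixed point presents $G$ as the fundamental group of a graph of groups with trivial edge groups, which is a nontrivial free product (possibly free-producted with a nontrivial free group coming from loops in the underlying graph). This gives (a). When $G$ has torsion the edge group $F$ of the splitting may be taken nontrivial, and the same decomposition presents $G$ as a nontrivial amalgam $A *_F B$ or HNN extension with finite amalgamated/associated subgroup, giving (b).

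For part (2) I would run the equivalences through the characterisation of two-ended groups as the virtually cyclic ones. For (b) $\Rightarrow$ (a): $\mathbb{Z}$ has two ends, and by the previously stated theorem a finite-index subgroup has the same number of ends, so $G$ has two ends. For (a) $\Rightarrow$ (c): a two-ended $G$ splits over a finite subgroup and so acts cocompactly on a tree $T$ with finite edge stabilizers; two-endedness forces the minimal invariant subtree to be a line $\mathbb{R}$, and the kernel $N$ of the action on the two ends is finite and normal, with $G/N$ embedding as a cocompact discrete group of isometries of $\mathbb{R}$. Hence $G/N$ is either $\mathbb{Z}$ (when only translations occur) or the infinite dihedral group $\mathbb{Z}_2 * \mathbb{Z}_2$ (when reflections occur). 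For (c) $\Rightarrow$ (b): both $\mathbb{Z}$ and $\mathbb{Z}_2 * \mathbb{Z}_2$ contain $\mathbb{Z}$ with finite index, and since $N$ is finite the preimage is finite-by-$\mathbb{Z}$, hence virtually $\mathbb{Z}$, so it contains a copy of $\mathbb{Z}$ of finite index in $G$. Finally (c) $\Leftrightarrow$ (d) is the Bass--Serre dictionary: $G/N \cong \mathbb{Z}$ corresponds to an HNN extension whose associated subgroups exhaust the vertex group, i.e. $G = F *_F$ with $F$ finite, while $G/N \cong \mathbb{Z}_2 * \mathbb{Z}_2$ corresponds to an amalgam $A *_F B$ of two index-two extensions of a finite edge group.

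The main obstacle, by a wide margin, is the splitting theorem itself: reducing the purely asymptotic hypothesis ``more than one end'' to an algebraic splitting requires constructing an invariant, essentially nested family of almost-invariant subsets of $G$ and organising it into a tree, which is precisely the deep contribution of Stallings and Dunwoody. Once that is in hand, distinguishing two ends from infinitely many reduces to the elementary observation that $A *_F B$ with $F$ finite has exactly two ends precisely when $[A:F]=[B:F]=2$ and infinitely many otherwise, and that an HNN extension over a finite $F$ has two ends only in the degenerate case $A=F$.
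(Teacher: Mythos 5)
The paper itself contains no proof of this statement: it is quoted verbatim (as background) from Stallings and Scott--Wall, so the only comparison available is with those sources, and your overall strategy---black-boxing Stallings' splitting theorem and doing Bass--Serre bookkeeping on top of it---is exactly how the cited references organize the material. Your part (2) is essentially correct: the cycle (a) $\Rightarrow$ (c) $\Rightarrow$ (b) $\Rightarrow$ (a) together with (c) $\Leftrightarrow$ (d) goes through, granting the standard count of ends of amalgams and HNN extensions over finite subgroups. One slip: the kernel of the $G$-action on the \emph{two ends} of the invariant line is not finite (it contains every translation); what is finite is the kernel of the action on the line itself, which lies in an edge stabilizer, and it is $G$ modulo \emph{that} kernel which embeds cocompactly in the simplicial isometries of ${\mathbb R}$, giving ${\mathbb Z}$ or ${\mathbb Z}_{2}*{\mathbb Z}_{2}$.

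The genuine gap is in part (1)(b). Stallings' theorem hands you ``nontrivial amalgam over a finite subgroup \emph{or} HNN extension over a finite subgroup,'' and you assert that this ``gives (b)''; but (b) as stated demands an amalgam, and the HNN alternative cannot in general be converted into one. Concretely, take
$G=\langle a,t \mid a^{2}=1,\ [a,tat^{-1}]=1\rangle$,
the HNN extension of $A=\langle a\rangle\times\langle b\rangle\cong{\mathbb Z}_{2}\times{\mathbb Z}_{2}$ over $F=\langle a\rangle$ with stable letter $t$ carrying $a$ to $b=tat^{-1}$. Here $[A:F]=[A:tFt^{-1}]=2$, so $G$ acts properly and cocompactly on its $4$-regular Bass--Serre tree: $G$ has torsion and infinitely many ends. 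Yet $G$ admits no nontrivial amalgam decomposition over a finite subgroup. Sketch: let $N=\ker(G\to{\mathbb Z})$, the normal closure of $a$, which is the right-angled Coxeter group $\langle a_{n}=t^{n}at^{-n}\mid a_{n}^{2},[a_{n},a_{n+1}]\rangle$. Every torsion element of $G$ lies in $N$, and a computation in $N^{\rm ab}\cong\bigoplus_{n\in{\mathbb Z}}{\mathbb Z}_{2}$ (plus projection to $G/N\cong{\mathbb Z}$) shows that every two-ended subgroup of $G$ containing a nontrivial torsion element also lies in $N$. Now suppose $G=X*_{Y}Z$ with $Y$ finite; since $Y$ is finitely generated, so are $X$ and $Z$, and Wall's formula $\chi(G)=\chi(X)+\chi(Z)-1/|Y|$ with $\chi(G)=1/4-1/2=-1/4$ forces (using that finite subgroups of $G$ have order at most $4$) each of $X,Z$ to be either finite or two-ended containing the torsion of $Y\neq 1$, while the case $Y=1$ is excluded by comparing $\chi$ and abelianizations. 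In all cases $X\cup Z\subset N$, contradicting $\langle X\cup Z\rangle=G$.

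So statement (1)(b), read literally, is false, and no argument can close the step you elided: the honest conclusion of your own argument is the correct classical one (Scott--Wall, Theorem 6.1), namely that a finitely generated group with more than one end \emph{splits over a finite subgroup}, meaning amalgam \emph{or} HNN extension. A blind proof attempt should have flagged this discrepancy between what Stallings delivers and what item (1)(b) claims, rather than passing over it; as written, your proof of (1)(b) is incomplete at exactly the point where the statement itself is defective.
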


\begin{proposition}[\cite{Geoghegan}*{\v{S}varc-Milnor Lemma, p. 440}, \cite{Loh}*{Proposition 8.2.5}]
\label{pro:acts_properly}
If a finitely generated group acts properly and cocompactly on a proper geodesic metric space then the number of ends of this space is the same as the number of ends of the group.
\end{proposition}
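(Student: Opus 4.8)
The plan is to deduce this from the classical Švarc–Milnor Lemma together with the fact that the number of ends is a quasi-isometry invariant of proper geodesic metric spaces. Write $X$ for the proper geodesic metric space and $G$ for the finitely generated group, acting properly and cocompactly by isometries. The first goal is to produce a quasi-isometry between $G$, equipped with the word metric $d_{S}$ coming from a suitable finite generating set $S$, and $X$; the second goal is to check that such a quasi-isometry identifies the two ends spaces, and finally that the word metric on $G$ and the geometric realization ${\rm Cay}(G,\mathcal C)$ have the same ends.

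First I would build the quasi-isometry. Fix a basepoint $x_0 \in X$. Cocompactness gives a radius $R>0$ with $G\cdot \overline{B}(x_0,R)=X$. Set
\[
S=\{g\in G : d(x_0,g x_0)\le 2R+1\}.
\]
Properness of the action forces $S$ to be finite and symmetric, since the orbit points $\{g x_0 : g\in S\}$ lie in the compact ball $\overline{B}(x_0,2R+1)$. The key step is a geodesic–subdivision argument: given $g\in G$, choose a geodesic from $x_0$ to $g x_0$ in $X$, subdivide it into subarcs of length at most $1$ with consecutive points $x_0=p_0,p_1,\dots,p_n=g x_0$, and for each $p_i$ pick $g_i\in G$ with $d(p_i,g_i x_0)\le R$ (taking $g_0=e$, $g_n=g$). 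Since $G$ acts by isometries, $d(x_0,g_i^{-1}g_{i+1}x_0)=d(g_ix_0,g_{i+1}x_0)\le R+1+R=2R+1$, so $g_i^{-1}g_{i+1}\in S$. Hence $S$ generates $G$ via the telescoping product $g=(g_0^{-1}g_1)(g_1^{-1}g_2)\cdots(g_{n-1}^{-1}g_n)$, a word of length $\le n\le d(x_0,gx_0)+1$. Combined with the reverse Lipschitz bound coming from the finiteness of $S$, this shows the orbit map $(G,d_S)\to X$, $g\mapsto g x_0$, is a quasi-isometry.

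Next I would invoke quasi-isometry invariance of the ends space. Because $X$ is proper and geodesic, the complement of a large metric ball has finitely many unbounded components, and a quasi-isometry carries deep (i.e.\ genuinely unbounded) components to deep components up to bounded error; this produces a homeomorphism ${\rm Ends}(X)\to {\rm Ends}(G,d_S)$ and, in particular, preserves cardinality. Finally, the vertex set $(G,d_S)$ is cobounded in ${\rm Cay}(G,\mathcal C)$ (every point of an edge lies within distance $\tfrac12$ of a vertex), so the inclusion is a quasi-isometry and ${\rm Ends}(G,d_S)\cong{\rm Ends}({\rm Cay}(G,\mathcal C))={\rm Ends}(G)$ by definition. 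Chaining the identifications yields ${\rm Ends}(X)\cong{\rm Ends}(G)$, hence equal numbers of ends.

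The main obstacle is the quasi-isometry invariance of ends: a quasi-isometry is in general neither continuous nor injective, so transporting the nested sequences of the definition in Subsection \ref{subsection:ends} requires genuine use of the hypotheses that both spaces are proper and geodesic, in order to control how the images of unbounded complementary regions remain connected up to bounded error and exhaust the space. The geodesic–subdivision construction of $S$ is routine once properness and cocompactness are in hand; the delicate point is verifying that the invariant "number of ends" is truly transported along the quasi-isometry.
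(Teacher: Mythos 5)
Your proposal is correct and follows precisely the route the paper itself relies on: the paper gives no independent proof but cites Geoghegan's \v{S}varc--Milnor Lemma and L\"oh's Proposition 8.2.5 (quasi-isometry invariance of the ends space for proper geodesic spaces), which are exactly the two ingredients you assemble, with your geodesic-subdivision construction of the finite generating set $S$ being the standard \v{S}varc--Milnor argument. The only point to flag is that you correctly identify the quasi-isometry invariance of ends as the step you invoke rather than prove, but since this is the very content of the cited Proposition 8.2.5, your treatment matches the paper's level of justification.
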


\section{Dessins d'enfants}
The classical theory of (Grothendieck's) dessins d'enfants corresponds to bipartite maps on compact connected orientable surfaces (see \cites{GiGo, J-W}). This theory can be carried out without problems to non-compact connected orientable surfaces and, in this section, we describe this in such a generality.

\subsection{Dessins d'enfant}
A {\it dessin d'enfant} is a tuple $${\mathcal D}=(X,\Gamma,\iota:\Gamma \hookrightarrow X),$$
where
\begin{enumerate}
\item $X$ is a secound countable connected (not necessarily compact) orientable surface;

\item $\Gamma$ is a connected bipartite graph (vertices are either black or white) such that every vertex has finite degree;

\item $\iota:\Gamma \hookrightarrow X$ is an embedding such that every connected component of $X\setminus \iota(\Gamma)$, called a {\it face} of the dessin d'enfant, is a polygon with a finite number (necessarily even) of sides. The {\it degree of a face} is half the number of its sides (where each side, which is the boundary of exactly one face,  has to be counted twice).
    
\end{enumerate}

Let us observe, from the above definition, that every compact subset $K$ of $X$ intersects only a finite number of faces, edges and vertices, respectively.

\medskip

We say that a dessin d'enfant ${\mathcal D}=(X,\Gamma,\iota:\Gamma \hookrightarrow X)$ is:
\begin{enumerate}
\item a {\it Grothendieck's dessin d'enfant} if $X$ is compact. In this case, we also say that it is a {\it dessin d'enfant of genus $g$}, where $g$ is the genus of $X$;

\item a {\it uniform dessin d'enfant} if all the black vertices (respectively, the white vertices and the faces) have the same degree;

\item a {\it bounded dessin d'enfant} if there is an integer $M>0$ such that the degrees of all vertices and faces are bounded above by $M$;

\item a {\it clean dessin d'enfant} if all white vertices have degree $2$. (This corresponds to the classical theory of maps on surfaces.)

\end{enumerate}

\subsection{Passport (valence) of a dessin d'enfant}
Let ${\mathcal D}=(X,\Gamma,\iota:\Gamma \hookrightarrow X)$ be a dessin d'enfant. The collections of vertices and faces of ${\mathcal D}$ are either finite or countable infinite. 

Let $\{v_{i}\}_{i \geq 1}$ (respectively, $\{w_{j}\}_{j \geq 1}$) be the collection of black (respectively, the collection of white) vertices of $\Gamma$, and let  $\{f_{k}\}_{k \geq 1}$ be the collection of faces of the dessin d'enfant. The {\it passport} (or {\it valence}) of ${\mathcal D}$ is the tuple
$$Val({\mathcal D})=(\alpha_{1}, \alpha_{2}, \ldots;\beta_{1},\beta_{2}, \ldots;\gamma_{1},\gamma_{2},\ldots),$$ where $\alpha_{i}$, $\beta_{j}$ and $\gamma_{k}$ denote the degrees of $v_{i}$, $w_{j}$ and $f_{k}$, respectively, with $\alpha_{i} \leq \alpha_{i+1}$, $\beta_{j} \leq \beta_{j+1}$ and $\gamma_{k} \leq \gamma_{k+1}$.

\subsection{Equivalence of dessins d'enfants}
Two dessins d'enfants
$${\mathcal D}_{1}=(X_{1},\Gamma_{1},\iota_{1}:\Gamma_{1} \hookrightarrow X_{1})\;  \mbox{ and } \; 
{\mathcal D}_{2}=(X_{2},\Gamma_{2},\iota_{2}:\Gamma_{2} \hookrightarrow X_{2})$$
are called {\it equivalent} if there exists 
an orientation-preserving homeomorphism $\phi:X_{1} \to X_{2}$ inducing an isomorphism of bipartite graphs (i.e., an isomorphism of graphs sending black vertices to black vertices).

\begin{remark}
Two equivalent dessins d'enfants necessarily have the same passport, but the converse is in general false.
\end{remark}

\subsection{Automorphisms of dessins d'enfants}
Let ${\mathcal D}=(X,\Gamma,\iota:\Gamma \hookrightarrow X)$ be a dessin d'enfant.

An {\it orientation-preserving automorphism} (respectively, an {\it orientation-reserving automorphism}) of ${\mathcal D}$ is an automorphism $\rho$ of $\Gamma$, as a bipartite graph, such that there exists an orientation-preserving (respectively, orientation-reversing) homeomorphism $\phi:X \to X$ inducing it.

The group of orientation-preserving automorphisms of ${\mathcal D}$ is denoted by ${\rm Aut}^{+}({\mathcal D})$, and the group all automorphisms of ${\mathcal D}$, both orientation-preserving and orientation-reserving, is denoted by ${\rm Aut}({\mathcal D})$. The subgroup ${\rm Aut}^{+}({\mathcal D})$ has index at most two in ${\rm Aut}({\mathcal D})$.

If the index is two, then we say that ${\mathcal D}$ is {\it reflexive}; otherwise we say that it is {\it chiral}; and if ${\rm Aut}^{+}({\mathcal D})$ acts transitively on the set of edges of $\Gamma$, then we say that ${\mathcal D}$ is {\it regular}.

\subsection{Marked monodromy groups of dessins d'enfants}
Let ${\mathcal D}=(X,\Gamma,\iota:\Gamma \hookrightarrow X)$ be a dessin d'enfant. Let $E$ be the set of edges of $\Gamma$ and let ${\mathfrak S}_{E}$ be the permutation group of $E$.

Let $v_{i}$ be a black vertex of $\Gamma$, which has degree $\alpha_{i}$. If $e_{i1},\ldots,e_{i\alpha_{i}} \in E$ are the edges of $\Gamma$ adjacent to $v_{i}$, following the counterclockwise orientation of $X$, then we can construct a cyclic permutation 
$\sigma_{i}=(e_{i1},\ldots,e_{i\alpha_{i}}) \in {\mathfrak S}_{E}$. Then we consider 
the permutation $$\sigma=\prod_{i}\sigma_{i} \in {\mathfrak S}_{E}.$$ 
We may proceed in a similar fashion for the white vertices $w_{j}$ to construct a permutation 
$$\tau=\prod_{j}\tau_{j} \in {\mathfrak S}_{E}.$$

The {\it marked monodromy group} of ${\mathcal D}$ is the subgroup $M_{\mathcal D}=\langle \sigma, \tau \rangle$ generated by $\sigma$ and $\tau$ in the symmetric group ${\mathfrak S}_{E}$. If we are not interested in the generators, but just in the group, we talk of the {\it monodromy group}.

\begin{remark}
The connectivity of $\Gamma$ asserts that the marked monodromy group $M_{\mathcal D}$ is a transitive subgroup of ${\mathfrak S}_{E}$.
\end{remark}

The permutation $\tau\sigma$ is again a product of disjoint finite cycle permutations
$$\tau\sigma=\prod_{k}\eta_{k},$$
where there is a bijection between these $\eta_{k}$ and the faces $f_{k}$ of ${\mathcal D}$ (the length of $\eta_{k}$ is equal to the degree of the correspondent face $f_{k}$).

\begin{remark}[\cite{GiGo}*{Proposition 4.13}]
If $\sigma, \tau \in {\mathfrak S}_{E}$ are two permutations, such that $\sigma$, $\tau$ and $\tau\sigma$ are each a product of disjoint finite cycle permutation and the group generated by them is transitive, then $M=\langle \sigma, \tau \rangle$ is the marked monodromy group of some dessin d'enfant with $E$ as its set of edges.
\end{remark}

\begin{remark}
The group ${\rm Aut}^{+}({\mathcal D})$, for a regular dessin d'enfant, can be generated by two elements. 
\end{remark}

\subsection{The automorphisms of the dessin d'enfant in terms of the marked monodromy group}
Let ${\mathcal D}$ be a dessin d'enfant with marked monodromy group $M_{\mathcal D}=\langle \sigma, \tau\rangle< {\mathfrak S}_{E}$.
In terms of the monodromy group $M_{\mathcal D}$, the group ${\rm Aut}^{+}({\mathcal D})$ can be identified with the centralizer of $M_{\mathcal D}$ in ${\mathfrak S}_{E}$, that is, with the subgroup formed of those $\eta \in {\mathfrak S}_{E}$ such that $\eta \sigma \eta^{-1}=\sigma$ and $\eta \tau \eta^{-1}=\tau$.

Moreover, each orientation-reversing automorphism of the dessin can be identified with those $\eta \in {\mathfrak S}_{E}$ such that 
$\eta \sigma \eta^{-1}=\sigma^{-1}$ and $\eta \tau \eta^{-1}=\tau^{-1}$.

\vspace{3mm}

\section{Belyi pairs}
The classical theory of Belyi curves corresponds to compact Riemann surfaces which can be defined over the algebraic numbers. We extend this concept to the class of non-compact Riemann surfaces in order to relate them to dessins d'enfants as previously defined.

\subsection{Locally finite holomorphic branched coverings}
Let $S_1, S_2$ be connected Riemann surfaces and $\varphi:S_1 \to S_2$ be a surjective holomorphic map. We say that $\varphi$ is a {\it locally finite holomorphic branched cover map} if:
\begin{enumerate}
\item the locus of branched values $B_{\varphi} \subset S_2$ of $\varphi$ is a (which might be empty) discrete set; 
\item $\varphi:S_1 \setminus \varphi^{-1}(B_{\varphi}) \to S_2 \setminus B_{\varphi}$ is a holomorphic covering map; and
\item each point $q \in B_{\varphi}$ has an open connected neighborhood $U$ such that $\varphi^{-1}(U)$ consists of a collection $\{V_{j}\}_{j\in I}$ of pairwise disjoint connected open sets such that each of the restrictions $\varphi|_{V_j}:V_{j} \to U$  is a finite degree branched cover (i.e., is equivalent to a branched cover of the form $z \in {\mathbb D} \mapsto z^{d_{j}} \in {\mathbb D}$, where ${\mathbb D}$ denotes the unit disc).
\end{enumerate}

Observe that in the case the surface $S_1$ is compact, then any non-constant holomorphic map is a locally finite branched cover map. The above definition is needed for the non-compact situation.

\subsection{Belyi pairs}
Let $S$ be a connected Riemann surface (not necessarily compact).
A {\it Belyi map} on $S$ is a locally finite holomorphic branched cover map $\beta:S \to \widehat{\mathbb C}$ whose branch values are contained inside the set $\{\infty, 0, 1\}$. In this case, we also say that $S$ is a {\it Belyi surface} and that $(S,\beta)$ is a {\it Belyi pair}.

A Belyi pair $(S,\beta)$ is called a {\it bounded Belyi pair} if the set of local degrees of the preimeages of $0$, $1$ and $\infty$ is bounded; and is called {\it uniform} if the local degrees of $\beta$ at the preimages of each branched value is the same, but they might be different for different values.

Two Belyi pairs $(S_{1},\beta_{1})$ and $(S_{2},\beta_{2})$ are called {\it equivalent} if there is a biholomorphism $\phi:S_{1} \to S_{2}$ such that $\beta_{1}=\beta_{2} \circ \phi$.

\subsection{Automorphisms of Belyi pairs}
A {\it holomorphic automorphism} (respectively, an {\it antiholomorphic automorphism}) of a Belyi pair  $(S,\beta)$ is a conformal (respectively, anticonformal) automorphism $\phi$ of $S$ such that $\beta \circ \phi=\beta$ (respectively, $\beta \circ \psi=J\circ \beta$, where $J(z)=\overline{z}$). 

The group of all automorphisms of $(S, \beta)$, both holomorphic and antiholomorphic, is denoted by ${\rm Aut}(S, \beta)$ and its subgroup (of index at most two) consisting of 
the holomorphic ones is denoted by ${\rm Aut}^{+}(S, \beta)$.

The Belyi pair $(S,\beta)$ is called {\it regular} if $\beta$ is a regular branched covering, in which case its deck group is ${\rm Aut}^{+}(S,\beta)$.

\subsection{Belyi pairs and dessins d'enfants}

\begin{theorem}[\cite{GiGo}*{Proposition 4.20}]

\label{Proposition:Belyi-dessins}
There is an one-to-one correspondence between the category of equivalence classes of (bounded, Grothendieck) dessins d'enfants and the equivalence classes of (bounded, finite degree) Belyi pairs. The correspondence preserves regularity and also uniformity.
\end{theorem}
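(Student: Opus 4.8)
The plan is to construct explicit inverse assignments between the two categories and to verify that all the finiteness adjectives (bounded, Grothendieck, regular, uniform) are matched under them, following the classical compact construction of \cite{GiGo} and checking that each step survives in the locally finite setting.

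\emph{From a Belyi pair to a dessin.} Given a Belyi pair $(S,\beta)$, I would set $\Gamma=\beta^{-1}([0,1])$, where $[0,1]\subset\widehat{\mathbb C}$ is the closed real segment, colouring the points of $\beta^{-1}(0)$ black and those of $\beta^{-1}(1)$ white, and declaring the connected components of $\beta^{-1}((0,1))$ to be the edges. Since the only branch values of $\beta$ lie in $\{0,1,\infty\}$, over the open arc $(0,1)$ the map is an honest covering, so each edge maps homeomorphically onto $(0,1)$ and joins exactly one black to one white vertex; thus $\Gamma$ is bipartite. Condition (3) in the definition of a locally finite branched cover applied at a point of $\beta^{-1}(0)$ (resp. $\beta^{-1}(1)$) shows that its local degree equals the number of edges emanating from it, so every vertex has finite degree; applied at a point of $\beta^{-1}(\infty)$ it shows that the corresponding complementary component of $\Gamma$ is a $2d$-gon, i.e. a face of degree $d$. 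Local finiteness of $\beta$ together with discreteness of the branch locus guarantees that every compact $K\subset S$ meets only finitely many cells, so $(S,\Gamma,\iota)$ is a bona fide dessin d'enfant.

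\emph{From a dessin to a Belyi pair.} Conversely, given ${\mathcal D}=(X,\Gamma,\iota)$ with edge set $E$, I would read off the monodromy pair $\sigma,\tau\in{\mathfrak S}_{E}$ and form the permutation representation $\theta:\pi_1(\widehat{\mathbb C}\setminus\{0,1,\infty\})\to{\mathfrak S}_{E}$ sending the standard loops around $0$ and $1$ to $\sigma$ and $\tau$. Because $\langle\sigma,\tau\rangle$ is transitive, $\theta$ determines a connected, possibly infinite-sheeted, covering $p:S^{\circ}\to\widehat{\mathbb C}\setminus\{0,1,\infty\}$ with fibre $E$, carrying the pulled-back complex structure. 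The ends of $S^{\circ}$ lying over a given puncture are in bijection with the cycles of the corresponding permutation ($\sigma$ over $0$, $\tau$ over $1$, and $\tau\sigma$ over $\infty$); each such end is a once-punctured disc covering a once-punctured disc with degree equal to the (finite) length of the cycle, and I would fill in one point there, obtaining $\beta:S\to\widehat{\mathbb C}$. The finiteness of all vertex and face degrees is precisely what makes every cycle finite, so each added point acquires a well-defined finite local degree; second countability of $X$ and local finiteness of the cell structure then yield the remaining requirements of a locally finite branched cover.

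The main obstacle is the verification, absent in the compact case, that this filling produces a map satisfying clause (3) of the locally finite branched cover definition \emph{globally}: one must show that each branch value admits a neighbourhood $U$ whose preimage splits as a disjoint union of finite-degree model discs. I would deduce this from the dessin axiom that every compact subset of $X$ meets only finitely many faces, edges and vertices, transported across the equivalence, which bounds the filling locally and prevents accumulation of sheets. Checking that the two assignments are mutually inverse, up to the respective equivalences, then reduces via this local model to the classical flag bookkeeping of \cite{GiGo}. Finally I would match the decorations: compactness of $X$ forces $E$ finite, hence $\beta$ of finite degree and $S$ compact, giving the Grothendieck versus finite-degree correspondence, while a uniform bound $M$ on all degrees is the same data as a bound on the local degrees of $\beta$. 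Uniformity transfers because black (resp. white, face) degrees are exactly the local degrees of $\beta$ over $0$ (resp. $1$, $\infty$). For regularity I would use the identification of ${\rm Aut}^{+}({\mathcal D})$ with the centralizer of $M_{\mathcal D}$ in ${\mathfrak S}_{E}$: edge-transitivity of ${\rm Aut}^{+}({\mathcal D})$ is equivalent to this centralizer acting transitively on $E$, which is in turn equivalent to $\beta$ being a regular covering with deck group ${\rm Aut}^{+}(S,\beta)$.
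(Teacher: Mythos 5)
Your proposal is correct in substance, and in the Belyi-to-dessin direction (taking $\Gamma=\beta^{-1}([0,1])$) it coincides with the paper; but in the dessin-to-Belyi direction you take a genuinely different route. The paper's sketch never leaves the surface $X$: it defines $\beta:X\to\widehat{\mathbb C}$ directly from the embedded graph (black vertices to $0$, white vertices to $1$, face centers to $\infty$), pulls back the complex structure of $\widehat{\mathbb C}\setminus\{0,1,\infty\}$ to the complement of $\beta^{-1}(\{0,1,\infty\})$ in $X$, and then extends it over that discrete set; the payoff is that the inverse check is immediate, since for the resulting Belyi map one literally has $\beta^{-1}([0,1])=\iota(\Gamma)$, so no identification of an abstractly built surface with $X$ is ever needed. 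You instead run the Riemann-existence-type argument: the permutation representation $\theta:\pi_{1}(\widehat{\mathbb C}\setminus\{0,1,\infty\})\to{\mathfrak S}_{E}$ determines a covering with fibre $E$, and the punctures are filled using the finiteness of all cycles of $\sigma$, $\tau$ and $\tau\sigma$. This buys a transparent treatment of regularity (centralizer transitivity) and uniformity (cycle lengths equal local degrees), and it meshes well with the monodromy and triangle-group formalism of Sections 3 and 5 of the paper. What it costs is an obligation the paper's construction avoids: you must show that the dessin $\beta^{-1}([0,1])$ on the reconstructed surface $S$ is equivalent to the original $(X,\Gamma,\iota)$, i.e., that the marked monodromy pair $(\sigma,\tau)$ is a complete invariant of locally finite dessins. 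You defer this to ``classical flag bookkeeping,'' but in the non-compact setting this is exactly the point requiring a new argument (it is the infinite analogue of the paper's remark quoting \cite{GiGo}*{Proposition 4.13}): one has to glue countably many finite polygons along the graph and check, say by exhaustion by finite subcomplexes, that the result is second countable and admits an orientation-preserving, colour-preserving homeomorphism onto $X$; local finiteness of the dessin makes this work, but it should be argued rather than cited from the compact case. Two smaller remarks: the components of the preimage of a small punctured disc are not literally ends of $S^{\circ}$ (though your meaning is clear), and clause (3) of the locally finite branched cover definition holds automatically for your filled covering, since every component of the preimage of a small disc is by construction a finite-degree model disc; so the ``main obstacle'' you single out is in fact the easy step, while the genuine one is the equivalence identification just described.
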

\begin{proof}[Idea of the proof]
A dessin d'enfant ${\mathcal D}=(X,\Gamma,\iota:\Gamma \hookrightarrow X)$ induces a surjective continuous map $\beta:X \to \widehat{\mathbb C}$ which sends the black vertices to $0$, the white vertices to $1$ and center of faces to $\infty$ defining a covering map $\beta:X\setminus \beta^{-1}(\{\infty,0,1\}) \to \widehat{\mathbb C}\setminus\{\infty,0,1\}$. We may pull-back, under $\beta$, the Riemann surface structure of $\widehat{\mathbb C}\setminus \{\infty,0,1\}$ to obtain a Riemann surface structure on 
$X\setminus \beta^{-1}(\{\infty,0,1\})$ making the restriction of $\beta$ holomorphic. Such a Riemann surface structure extends to a Riemann surface structure $S$ on $X$ making $\beta$ a Belyi map. Conversely, if $(S,\beta)$ is a Belyi pair, then $\Gamma=\beta^{-1}([0,1])$ produces a dessin d'enfant on $S$.
\end{proof}

\begin{theorem}[Belyi's theorem \cite{Belyi80}*{Theorem 4}]
Every Belyi pair of finite degree (Grothendieck's dessin d'enfant) is equivalen to one of the form $(S,\beta)$, where $S$ is a smooth algebraic curve defined over $\overline{\mathbb Q}$ and with  $\beta$ a rational map also defined over it.
\end{theorem}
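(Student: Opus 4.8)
The plan is to prove the descent half of Belyi's theorem: every finite-degree Belyi pair admits a model over $\overline{\mathbb Q}$. (The converse direction, that every curve over $\overline{\mathbb Q}$ carries a Belyi map, is the deeper part of Belyi's paper and is not needed for this statement.) The engine of the argument is a finiteness fact, which I would record first. By Theorem~\ref{Proposition:Belyi-dessins}, a Belyi pair $(S,\beta)$ of degree $n$ is determined up to equivalence by its dessin d'enfant, hence by a transitive pair of permutations $(\sigma,\tau)\in {\mathfrak S}_{n}\times{\mathfrak S}_{n}$ taken modulo simultaneous conjugation. For each fixed $n$ there are only finitely many such pairs, and therefore only finitely many equivalence classes of Belyi pairs of degree $n$.

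Next I would introduce the action of ${\rm Aut}(\mathbb{C}/\mathbb{Q})$. Since $S$ is a compact Riemann surface, Riemann's existence theorem realizes it as a smooth projective curve and $\beta$ as a rational map, so the pair $(S,\beta)$ is cut out by finitely many polynomials with coefficients in $\mathbb{C}$. Applying $\theta\in {\rm Aut}(\mathbb{C}/\mathbb{Q})$ to these coefficients yields a conjugate pair $(S^{\theta},\beta^{\theta})$. The point I would stress is that $\beta^{\theta}$ is again a Belyi map of the same degree $n$: Galois conjugation is an isomorphism over $\mathbb{Q}$, so it preserves the covering degree and the ramification profile, and it fixes the branch locus because $0,1,\infty$ are rational and hence fixed by $\theta$. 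Combined with the previous paragraph, as $\theta$ ranges over ${\rm Aut}(\mathbb{C}/\mathbb{Q})$ the conjugates $(S^{\theta},\beta^{\theta})$ realize at most finitely many equivalence classes; in particular $S$ has only finitely many conjugates up to isomorphism.

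Finally I would invoke Weil's descent criterion in the form due to Gonz\'alez-Diez: a smooth projective complex variety with only finitely many ${\rm Aut}(\mathbb{C}/\mathbb{Q})$-conjugates (up to isomorphism) has a model over $\overline{\mathbb Q}$, since $\overline{\mathbb Q}$ is exactly the subfield of $\mathbb{C}$ formed by the elements with finite ${\rm Aut}(\mathbb{C}/\mathbb{Q})$-orbit. This already descends $S$. To descend $\beta$ simultaneously I would apply the relative version of the criterion to the pair $(S,\beta)$, viewed as a point of the Hurwitz space of degree-$n$ covers of $\widehat{\mathbb C}$ branched over $\{0,1,\infty\}$. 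The main obstacle is precisely this last step: passing from finiteness of conjugates to a model of the \emph{pair}, rather than of the curve alone, requires controlling the descent obstruction, i.e.\ the possible gap between the field of moduli and a field of definition. Here the rigidity of dessins is decisive: the relevant Hurwitz space is a finite set of points with no moduli, so the field of moduli of $(S,\beta)$ is a number field, and any field of definition provided by the criterion is a finite extension of it; either way $(S,\beta)$ lies over $\overline{\mathbb Q}$.
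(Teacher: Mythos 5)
The paper itself offers no proof of this statement: it is quoted as Belyi's theorem with a citation to \cite{Belyi80}, so there is no internal argument to compare yours against. Your sketch follows the standard modern route to the descent direction (the one carefully worked out by Wolfart \cite{Wolfart}, K\"ock and Gonz\'alez-Diez, and presented in \cite{GiGo}, both of which sit in this paper's bibliography): (i) for fixed degree $n$ there are only finitely many equivalence classes of Belyi pairs, via the correspondence of Theorem \ref{Proposition:Belyi-dessins} and the description of a degree-$n$ dessin by a transitive pair $(\sigma,\tau)\in{\mathfrak S}_{n}\times{\mathfrak S}_{n}$ up to simultaneous conjugation; (ii) the class of degree-$n$ Belyi pairs is stable under ${\rm Aut}(\mathbb{C}/\mathbb{Q})$, since conjugation preserves degree and ramification and fixes the rational branch values $0,1,\infty$; (iii) therefore every orbit meets finitely many equivalence classes, and a Weil-type descent criterion yields a model over $\overline{\mathbb Q}$. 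You also correctly restrict the scope: the statement as given is only the descent half, not the harder ``defined over $\overline{\mathbb Q}$ implies Belyi map exists'' direction, which is the genuinely new content of Belyi's paper.

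Two cautions on step (iii), which carries essentially all of the weight. First, the parenthetical justification you offer for the criterion --- that $\overline{\mathbb Q}$ is exactly the set of complex numbers with finite ${\rm Aut}(\mathbb{C}/\mathbb{Q})$-orbit --- is only the zero-dimensional instance; for curves, and a fortiori for pairs $(S,\beta)$, the criterion requires Weil's cocycle descent machinery, so it must enter strictly as a citation rather than as a consequence of that remark. Second, be explicit about which version you cite. If you invoke the relative (pair) version of Gonz\'alez-Diez's criterion, then steps (i)--(iii) already finish the proof and your closing discussion of field of moduli versus field of definition is redundant. If instead you only had the absolute version (descending $S$ alone), then the Hurwitz-space rigidity remark, as stated, does not close the gap: the assertion that any field of definition is a finite extension of the field of moduli is itself a nontrivial theorem for pairs, not a formal consequence of the Hurwitz space being finite. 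As written, the argument is a correct and complete reduction provided the pair version of the criterion is what is being cited.
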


The previous results asserts that there is an action of the absolute Galois group $Gal(\overline{\mathbb Q}/\mathbb{Q})$ on Grothendieck's dessins d'enfants.

\begin{theorem}[\cites{GiGo1, Gro,Sch}]
The action of $Gal(\overline{\mathbb Q}/\mathbb{Q})$ on Grothendieck's dessins d'enfants is faithful.
\end{theorem}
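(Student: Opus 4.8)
The plan is to prove the statement directly in contrapositive spirit: if $\sigma \in Gal(\overline{\mathbb{Q}}/\mathbb{Q})$ fixes the equivalence class of every Grothendieck dessin d'enfant, I will show that $\sigma$ must be the identity. The only substantial inputs are the correspondence between dessins and Belyi pairs (Theorem \ref{Proposition:Belyi-dessins}) together with Belyi's theorem, both recorded above; everything else reduces to elementary elliptic-curve theory.

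First I would make the Galois action explicit. By Belyi's theorem every finite-degree Belyi pair may be represented as $(S,\beta)$ with $S$ a smooth projective curve defined over $\overline{\mathbb{Q}}$ and $\beta$ a rational map over $\overline{\mathbb{Q}}$ whose branch values lie in $\{0,1,\infty\}$. Since $\sigma$ fixes $0,1,\infty$, applying $\sigma$ to the defining coefficients produces another Belyi pair $(S^{\sigma},\beta^{\sigma})$, and transporting this through the bijection of Theorem \ref{Proposition:Belyi-dessins} is precisely the Galois action on dessins. Thus the hypothesis that $\sigma$ fixes every dessin translates into the assertion that $(S,\beta)$ is equivalent to $(S^{\sigma},\beta^{\sigma})$ for every Belyi pair; recalling that equivalence of Belyi pairs is a biholomorphism $\phi\colon S \to S^{\sigma}$ intertwining the Belyi maps, this yields in particular $S \cong S^{\sigma}$ over $\overline{\mathbb{Q}}$.

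Next I would discard the Belyi restriction on the base curve. Belyi's theorem guarantees that \emph{every} smooth projective curve over $\overline{\mathbb{Q}}$ admits some Belyi map, so the previous step in fact gives $S \cong S^{\sigma}$ for every curve $S/\overline{\mathbb{Q}}$. Specializing to elliptic curves, two elliptic curves over the algebraically closed field $\overline{\mathbb{Q}}$ are isomorphic if and only if they have the same $j$-invariant, and since $j$ is a rational expression in the coefficients with rational coefficients one has $j(E^{\sigma})=\sigma(j(E))$. Therefore $E \cong E^{\sigma}$ forces $\sigma(j(E))=j(E)$ for every elliptic curve $E/\overline{\mathbb{Q}}$. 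Because every element of $\overline{\mathbb{Q}}$ is realised as the $j$-invariant of some elliptic curve over $\overline{\mathbb{Q}}$, the automorphism $\sigma$ fixes $\overline{\mathbb{Q}}$ pointwise, that is, $\sigma=\mathrm{id}$, which is exactly faithfulness.

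The conceptual weight rests entirely on Belyi's theorem, whose surjective content ``every $\overline{\mathbb{Q}}$-curve is a Belyi curve'' is invoked twice above; I expect the only delicate point to be the bookkeeping of the second paragraph, namely verifying that fixing the purely combinatorial dessin genuinely forces an isomorphism of the underlying algebraic curves (via the biholomorphism built into equivalence of Belyi pairs, together with the algebraicity supplied by Belyi's theorem), so that the comparison of $j$-invariants is legitimate. Once this translation is secured, the descent to $j$-invariants and the realisation of an arbitrary $j_{0}\in\overline{\mathbb{Q}}$ are routine.
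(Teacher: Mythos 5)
Your proof is correct, but there is nothing in the paper to compare it against: the paper states this theorem purely by citation (\cite{Gro}, \cite{Sch}, \cite{GiGo1}) and supplies no internal proof. What you have reconstructed is the classical argument, essentially Grothendieck's, which in fact shows faithfulness already on genus-one dessins: if $\sigma$ fixes the class of every dessin, the dessin/Belyi-pair dictionary (Theorem \ref{Proposition:Belyi-dessins}) converts this into $(S,\beta)\sim(S^{\sigma},\beta^{\sigma})$ for every finite-degree Belyi pair; since every smooth projective curve over $\overline{\mathbb{Q}}$ carries some Belyi map, every elliptic curve satisfies $E\cong E^{\sigma}$, whence $\sigma(j(E))=j(E)$, and surjectivity of the $j$-invariant onto $\overline{\mathbb{Q}}$ forces $\sigma=\mathrm{id}$. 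Two points to tighten. First, the input ``every curve over $\overline{\mathbb{Q}}$ admits a Belyi map'' is the \emph{converse} direction of Belyi's theorem to the one recorded in the paper, whose statement gives only the descent direction (every finite-degree Belyi pair has a model over $\overline{\mathbb{Q}}$); the fact you need is true and standard, but it is Belyi's original result \cite{Belyi80} (cf.\ \cite{Wolfart}) and your proof genuinely depends on it, so it should be invoked explicitly rather than folded into the same citation. Second, for the $j$-invariant comparison you only need $E\cong E^{\sigma}$ as complex curves: over an algebraically closed field two genus-one curves are isomorphic if and only if their $j$-invariants agree (any curve isomorphism can be composed with a translation so as to respect origins), so your parenthetical insistence that the isomorphism be ``over $\overline{\mathbb{Q}}$'' is superfluous and would itself require a small descent argument. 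Finally, note what each approach buys: your elliptic-curve argument is short, self-contained, and proves faithfulness of the action on the set of all Grothendieck dessins (indeed on those of genus one), whereas the works the paper cites establish sharper refinements --- faithfulness within each fixed genus, including genus zero (trees), and on regular dessins --- which cannot be extracted from the $j$-invariant trick.
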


\section{Dessins d'enfants and triangular groups}

\subsection{Dessins subgroups of $\Gamma(2)$}
The aim of this section is to produce dessins d'enfants (or Belyi pairs) by suitable subgroups of 
$$\Gamma(2)=\langle A(z)=z+2, B(z)=z/(1-2z)\rangle \cong F_{2},$$ 
extending the known situation for the case of Grothendieck's dessins d'enfants.

Note that $\Gamma(2)$ is a normal subgroup of ${\rm PSL}_{2}({\mathbb Z})$ of index six such that ${\rm PSL}_{2}({\mathbb Z})/\Gamma(2) \cong {\mathfrak S}_{3}$.
The quotient ${\mathbb H}/\Gamma(2)$ is isomorphic to $\widehat{\mathbb C} \setminus \{\infty,0,1\}$.

Let $S$ be a connected Riemann surface and $\beta:S \to \widehat{\mathbb{C}}$ be a surjective meromorphic map, with branch values contained in $\{\infty,0,1\}$, which defines a covering map $\beta: S \setminus \beta^{-1}(\{\infty,0,1\}) \to \widehat{\mathbb C}\setminus \{\infty,0,1\}$. 
In this generality, it might happen that $\beta$ is not a locally finite branched cover map (so it might be not a Belyi map). 
By the covering maps theory, there is a subgroup $K$ of $\Gamma(2)$ such that the unbranched holomorphic cover $\beta:S\setminus \beta^{-1}(\{\infty,0,1\})\to \widehat{\mathbb C}\setminus \{\infty,0,1\}$ is induced by the inclusion $K<\Gamma(2)$.

The condition for $\beta$ to define a Belyi map on $S$ (i.e., to be a locally finite branched covering map) is equivalent for $K$ to satisfy the following property:
for every parabolic element $Z \in \Gamma(2)$ there is some positive integer $n_{Z}>0$ such that $Z^{n_{Z}}\in K$. We say that such kind of subgroup of $\Gamma(2)$ is a 
{\it dessin subgroup}. 

The following result states an equivalence between dessin subgroups of $\Gamma(2)$ and dessins d'enfants (Belyi pairs).

\begin{theorem}[\cite{GiGo}*{Theorem 4.31}]
There is an one-to-one correspondence between the category of equivalence classes of dessins d'enfants and congugacy classes of dessins subgroups of $\Gamma(2)$. In this equivalence, Grothendieck's dessins d'enfants corresponds to finite index subgroups.
\end{theorem}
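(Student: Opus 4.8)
The plan is to factor the desired correspondence through the one already established in Theorem~\ref{Proposition:Belyi-dessins} between equivalence classes of dessins d'enfants and equivalence classes of Belyi pairs. Granting that, it suffices to produce a bijection between equivalence classes of Belyi pairs and conjugacy classes of dessin subgroups of $\Gamma(2)$, and to check that it sends Grothendieck's dessins (equivalently, compact Belyi surfaces) to finite-index subgroups. The whole argument rests on the identification $\mathbb{H}/\Gamma(2) \cong \widehat{\mathbb{C}} \setminus \{\infty,0,1\}$, which exhibits $\Gamma(2) \cong F_2$ as the fundamental group of the thrice-punctured sphere with universal cover $\mathbb{H}$, together with the standard Galois correspondence of covering space theory between conjugacy classes of subgroups of $\pi_1$ and isomorphism classes of connected covers.

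For the direction from a Belyi pair to a subgroup, I would take $(S,\beta)$ and restrict $\beta$ over $\widehat{\mathbb{C}} \setminus \{\infty,0,1\}$, obtaining a connected (possibly infinite-sheeted) holomorphic covering $\beta_0 : S\setminus\beta^{-1}(\{\infty,0,1\}) \to \widehat{\mathbb{C}}\setminus\{\infty,0,1\}$. By covering theory this cover is classified up to isomorphism by the conjugacy class of $K=(\beta_0)_*\,\pi_1\!\left(S\setminus\beta^{-1}(\{\infty,0,1\})\right) < \Gamma(2)$. As recorded in the paragraph preceding the statement, the hypothesis that $\beta$ is a \emph{locally finite} branched cover (an honest Belyi map, with finite local degrees over each of $0,1,\infty$) translates exactly into the dessin-subgroup condition: each parabolic $Z\in\Gamma(2)$ stabilizes a cusp, i.e. corresponds to a loop around one of the three punctures, and $Z^{n_Z}\in K$ for some $n_Z>0$ equal to the relevant local degree. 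Hence $K$ is a dessin subgroup, well defined up to conjugacy.

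Conversely, from a dessin subgroup $K$ I would build a Belyi pair by forming the intermediate holomorphic cover $\mathbb{H}/K \to \mathbb{H}/\Gamma(2)=\widehat{\mathbb{C}}\setminus\{\infty,0,1\}$. The dessin condition guarantees that over a punctured neighbourhood of each of $0,1,\infty$ every component of the preimage is a finite cyclic cover, a punctured disc mapping by $z\mapsto z^{d}$, so each such puncture can be filled in by a single point to produce a Riemann surface $S$ with a holomorphic extension $\beta : S \to \widehat{\mathbb{C}}$ that is, by construction, a locally finite branched cover with branch values in $\{\infty,0,1\}$; thus $(S,\beta)$ is a Belyi pair. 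These two constructions should be mutually inverse: filling punctures and then deleting the branch locus recovers the original cover, and the subgroup attached to the constructed pair is again $K$.

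It then remains to match equivalence relations and to treat compactness. Two Belyi pairs are equivalent precisely when there is a biholomorphism $\phi:S_1\to S_2$ with $\beta_1=\beta_2\circ\phi$; after removing branch loci this is exactly an isomorphism of the associated covers of the thrice-punctured sphere over the identity of the base, which by the Galois correspondence holds if and only if the classifying subgroups are conjugate in $\Gamma(2)$. Finally, $S$ is compact iff $\beta$ has finite degree iff $\beta_0$ is finite-sheeted iff $[\Gamma(2):K]<\infty$, giving the asserted statement for Grothendieck's dessins. The main obstacle I expect is the puncture-filling step together with the claim that covering isomorphisms extend across the added points: one must verify the local conformal model $z\mapsto z^{d}$ at each branch point, check that the exponent $n_Z$ (equivalently $d$) is intrinsic to the conjugacy class of the parabolic $Z$, and confirm that these extensions are compatible so that the object-level bijection descends cleanly to equivalence classes.
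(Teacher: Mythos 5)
Your proposal is correct and takes essentially the same route as the paper: the paper does not prove this statement itself but reduces it to exactly the setup you describe (the identification $\mathbb{H}/\Gamma(2)\cong\widehat{\mathbb{C}}\setminus\{\infty,0,1\}$, the covering-space correspondence assigning to a Belyi pair a conjugacy class of subgroups $K<\Gamma(2)$, and the translation of local finiteness into the parabolic-power condition) and then delegates the remaining verifications to \cite{GiGo}. The points you flag as needing care --- the local model $z\mapsto z^{d}$ for filling punctures, conjugacy invariance, and compactness iff finite index --- are precisely the standard facts covered by that reference.
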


\subsection{Bounded Belyi pairs and triangular groups}
Let us consider a bounded Belyi pair $(S,\beta)$. The boundness condition permits to compute the least common multiple of all local degrees of the points in each fiber $\beta^{-1}(p)$, $p \in \widehat{\mathbb C}$.

Let $a,b,c \geq 1$ be the least common multiple of the local degrees of $\beta$ at the preimages of $0$, $1$ and $\infty$, respectively. In this case, the triple $(a,b,c)$ is called the {\it type} of $(S,\beta)$. By the equivalence with bounded dessins d'enfants, the above is also the type of the associated dessin d'enfant.

Set ${\mathbb X}(a,b,c)$ equal to the hyperbolic plane ${\mathbb H}$, the complex plane ${\mathbb C}$ or the Riemann sphere $\widehat{\mathbb C}$ if $a^{-1}+b^{-1}+c^{-1}$ is less than $1$, equal to $1$ or bigger than $1$, respectively. 

Let us consider a triangular group (unique up to conjugation by M\"obius transformations)
$\Delta(a,b,c)=\langle x,y: x^{a}=y^{b}=(yx)^{c}=1\rangle$, acting as a discontinuous group of holomorphic automorphisms of ${\mathbb X}(a,b,c)$. The quotient complex orbifold ${\mathcal O}(a,b,c):={\mathbb X}/\Delta(a,b,c)$ is the Riemann sphere whose cone points are $0$ (of cone order $a$), $1$ (of cone order $b$) and $\infty$ (of cone order $c$). 

By the uniformization theorem, there exists a subgroup $K$ of $\Delta(a,b,c)$ such that:
\begin{itemize}
\item[(i)] the quotient orbifold $S_{K}:={\mathbb X}(a,b,c)/K$ has a Riemann surface structure biholomorphically equivalent to $S$, and 
\item[(ii)] the Belyi map $\beta$ is induced by the inclusion $K<\Delta(a,b,c)$ (finite index condition on $K$ is equivalent for $S$ to be compact). 
\end{itemize}

\begin{theorem}[\cite{GiGo}*{Theorem 4.31}, \cite{J-W}*{Theorem 3.10}]\label{equiv}
There is a natural one-to-one correspondence between the category of equivalence classes of bounded Belyi pairs (bounded dessins d'enfants) of type $(a,b,c)$ and the category of conjugacy classes of subgroups of $\Delta(a,b,c)$. The finite degree Belyi pairs (Grothendieck's dessins d'enfants) correspond to finite index subgroups.

The regular Belyi pairs (regular dessins d'enfants) correspond to the torsion-free normal subgroups. The uniform ones correspond to torsion-free subgroups. 
\end{theorem}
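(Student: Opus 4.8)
The plan is to deduce the whole statement from the Galois correspondence of (orbifold) covering space theory applied to the compact quotient orbifold ${\mathcal O}(a,b,c)={\mathbb X}(a,b,c)/\Delta(a,b,c)$. The paragraph preceding the statement already records that ${\mathcal O}(a,b,c)$ is the Riemann sphere with three cone points, of orders $a,b,c$ at $0,1,\infty$, that its orbifold universal cover is ${\mathbb X}(a,b,c)$, and that $\Delta(a,b,c)$ acts on ${\mathbb X}(a,b,c)$ as the corresponding (properly discontinuous, cocompact) deck group. I would take this as the geometric input and organise the proof as a dictionary: a conjugacy class of subgroups $K\le\Delta(a,b,c)$ $\longleftrightarrow$ an isomorphism class of connected orbifold coverings of ${\mathcal O}(a,b,c)$ $\longleftrightarrow$ an equivalence class of bounded Belyi pairs of type $(a,b,c)$.

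First I would set up the map from subgroups to Belyi pairs. Given $K\le\Delta(a,b,c)$, form $S_{K}={\mathbb X}(a,b,c)/K$ together with the holomorphic covering $p_{K}\colon S_{K}\to{\mathcal O}(a,b,c)$ induced by the inclusion, and identify the underlying surface of ${\mathcal O}(a,b,c)$ with $\widehat{\mathbb C}$ so that the cone points of orders $a,b,c$ go to $0,1,\infty$. Writing $\beta_{K}$ for the underlying map of $p_{K}$, the only possible branch values are $0,1,\infty$, and the local behaviour is governed by stabilisers: if $\tilde p\in{\mathbb X}(a,b,c)$ lies over $0$ and $K\cap\mathrm{Stab}_{\Delta}(\tilde p)$ has order $m\mid a$, then in suitable charts $\beta_{K}$ reads $s\mapsto s^{a/m}$, so the local degree at the image point is $a/m$, and similarly over $1$ and $\infty$ with $b,c$. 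In particular every local degree divides $a$, $b$ or $c$, hence is bounded by $\max\{a,b,c\}$; this is exactly the boundedness needed to conclude that $\beta_{K}$ is a locally finite holomorphic branched cover, i.e.\ that $(S_{K},\beta_{K})$ is a \emph{bounded} Belyi pair whose type divides, and (by the computation below) in the torsion-free case equals, $(a,b,c)$. That the Riemann surface structure obtained here is the one forced by the dessin agrees with the pull-back description in Theorem \ref{Proposition:Belyi-dessins}.

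Next I would prove surjectivity and injectivity. For the converse, starting from a bounded Belyi pair $(S,\beta)$ of type $(a,b,c)$, restrict $\beta$ to obtain an honest covering $S\setminus\beta^{-1}\{0,1,\infty\}\to\widehat{\mathbb C}\setminus\{0,1,\infty\}$; the type hypothesis says precisely that the ramification of $\beta$ over $0,1,\infty$ divides $a,b,c$, which is the defining condition for this covering to extend to an \emph{orbifold} covering of ${\mathcal O}(a,b,c)$. By the correspondence such a covering is induced by a conjugacy class of subgroups $K\le\Delta(a,b,c)$, giving the inverse assignment. To see that the two constructions descend to the stated equivalences I would check that a biholomorphism $\phi\colon S_{1}\to S_{2}$ with $\beta_{1}=\beta_{2}\circ\phi$ is the same datum as an isomorphism of the covering orbifolds over ${\mathcal O}(a,b,c)$, which in turn is equivalent to $K_{1}$ and $K_{2}$ being conjugate in $\Delta(a,b,c)$; the cone orders being determined by the local degrees together with $a,b,c$, no extra orbifold bookkeeping is needed, and uniqueness up to equivalence is supplied by Theorem \ref{Proposition:Belyi-dessins}.

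Finally I would read off the refinements from the dictionary. The degree of $\beta_{K}$ equals the index $[\Delta(a,b,c):K]$, so finite index corresponds to compact $S_{K}$, that is, to finite degree Belyi pairs (Grothendieck's dessins). The subgroup $K$ is normal exactly when $p_{K}$ is Galois, equivalently when $\beta_{K}$ is a regular branched covering with deck group $\Delta(a,b,c)/K={\rm Aut}^{+}(S_{K},\beta_{K})$, which is the regular case; since a Galois cover is homogeneous on fibres it is automatically uniform, forcing all local degrees over $0,1,\infty$ to equal $a,b,c$ and hence $K$ to be torsion-free, so regularity matches torsion-free normal subgroups. More generally $K$ is torsion-free precisely when every stabiliser $K\cap\mathrm{Stab}_{\Delta}(\tilde p)$ is trivial, i.e.\ when all local degrees over each branch value coincide with $a,b,c$, which is the uniform case. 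I expect the main obstacle to be the non-compact, possibly infinite-degree, part of this argument: one must make sure the Galois correspondence and, above all, the extension of the punctured-sphere covering across the cone points remain valid for coverings of infinite degree, and that boundedness is the exact hypothesis keeping $\beta$ locally finite; the rest is the standard triangle-group dictionary transported verbatim from the classical finite setting.
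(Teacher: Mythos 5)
The paper gives no proof of this theorem at all---it is quoted from \cite{GiGo}*{Theorem 4.31} and \cite{J-W}*{Theorem 3.10}, with only the uniformization setup in the preceding paragraph---and your orbifold-covering dictionary (subgroups of $\Delta(a,b,c)$ $\leftrightarrow$ coverings of ${\mathcal O}(a,b,c)$ $\leftrightarrow$ Belyi pairs, with local degrees read off from the stabilizers $K\cap\mathrm{Stab}_{\Delta}(\tilde p)$) is precisely the argument those sources use, so this counts as the same approach and your write-up is sound. If anything you are more careful than the statement itself: as you note, an arbitrary subgroup $K$ meeting the point stabilizers nontrivially but uniformly produces a pair whose type only \emph{divides} $(a,b,c)$ (e.g.\ $K=\Delta(a,b,c)$ gives type $(1,1,1)$), which is why the cited references phrase the bijection with ``type dividing $(a,b,c)$'' and why your torsion-free computation is exactly what reconciles the two formulations.
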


 There is a group $$\overline{\Delta}(a,b,c)=\langle \tau_{1}, \tau_{2}, \tau_{3}: \tau_{1}^{2}=\tau_{2}^{2}=\tau_{3}^{2}=(\tau_{2}\tau_{1})^{a}=(\tau_{1}\tau_{3})^{b}=(\tau_{3}\tau_{2})^{c}=1\rangle,$$
where $\tau_{1},\tau_{2}$ and $\tau_{3}$ are reflection on the three sides of a circular triangle with angles $\pi/a,\pi/{b}, \pi/c$, such that $\Delta(a,b,c)$ is its index two subgroup of orientation-preserving elements ($x=\tau_{2}\tau_{1}, y=\tau_{1}\tau_{3}$). 

\begin{theorem}[\cite{GiGo}*{Theorem 4.43}, \cite{J-W}*{\S 3.3.2}]\label{autgroups}
Let $K<\Delta(a,b,c)$ and $(S,\beta)$ a Belyi pair associated to $K$ as described above.
The group ${\rm Aut}^{+}(S,\beta)$ corresponds to $N_{\Delta(a,b,c)}(K)/K$ and ${\rm Aut}(S,\beta)$ corresponds to $N_{\overline{\Delta}(a,b,c)}(K)/K$, where $N_{\Delta(a,b,c)}(K)$ and $N_{\overline{\Delta}(a,b,c)}(K)$ are the corresponding normalizers of $K$ in $\Delta(a,b,c)$ and $\overline{\Delta}(a,b,c)$, respectively.
\end{theorem}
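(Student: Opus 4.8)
The plan is to prove Theorem~\ref{autgroups} by identifying automorphisms of the Belyi pair $(S,\beta)$ with coset spaces coming from the normalizer of $K$, exploiting the fact that $S = {\mathbb X}(a,b,c)/K$ is a quotient of the simply connected space ${\mathbb X}(a,b,c)$ by the deck-transformation-type action of $K$. First I would recall the covering-space dictionary: since ${\mathbb X}(a,b,c)$ is the universal object and $\Delta(a,b,c)$ acts on it with quotient the orbifold ${\mathcal O}(a,b,c)$, the subgroup $K$ plays the role of the fundamental group of $S$ (in the orbifold sense), and $\beta:S \to {\mathcal O}(a,b,c)$ is the covering induced by $K < \Delta(a,b,c)$. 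The key structural fact is that any conformal automorphism $\phi$ of $S$ satisfying $\beta\circ\phi=\beta$ lifts to a conformal automorphism $\widetilde{\phi}$ of ${\mathbb X}(a,b,c)$ that descends to the identity on the base orbifold; such lifts must normalize the deck group $K$ and are therefore elements of $N_{\Delta(a,b,c)}(K)$.

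I would carry out the main construction as follows. Given $\phi \in {\rm Aut}^{+}(S,\beta)$, lift it to $\widetilde{\phi}$ acting on ${\mathbb X}(a,b,c)$. Because $\phi$ commutes with $\beta$ (equivalently, covers the identity of ${\mathcal O}(a,b,c)$), the lift $\widetilde{\phi}$ must be an orientation-preserving isometry covering the identity, hence $\widetilde{\phi} \in \Delta(a,b,c)$, and the relation $\widetilde{\phi}\, K\, \widetilde{\phi}^{-1} = K$ holds so that $\phi$ is well defined on the quotient; this places $\widetilde{\phi} \in N_{\Delta(a,b,c)}(K)$. Conversely, any $g \in N_{\Delta(a,b,c)}(K)$ induces a well-defined conformal self-map of $S = {\mathbb X}(a,b,c)/K$ commuting with $\beta$. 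Two such lifts $g_1,g_2$ induce the same automorphism of $S$ precisely when $g_1 g_2^{-1} \in K$, which gives the isomorphism ${\rm Aut}^{+}(S,\beta) \cong N_{\Delta(a,b,c)}(K)/K$. For the full automorphism group I would repeat this argument in $\overline{\Delta}(a,b,c)$, observing that an antiholomorphic automorphism $\psi$ with $\beta\circ\psi = J\circ\beta$ lifts to an orientation-reversing isometry of ${\mathbb X}(a,b,c)$ covering the complex conjugation on the base, hence lies in $\overline{\Delta}(a,b,c)\setminus\Delta(a,b,c)$ and again normalizes $K$; this yields ${\rm Aut}(S,\beta) \cong N_{\overline{\Delta}(a,b,c)}(K)/K$.

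The main obstacle I anticipate is the \emph{lifting step}: justifying that an automorphism $\phi$ of the quotient $S$, a priori defined only downstairs, lifts to a \emph{global} isometry $\widetilde{\phi}$ of ${\mathbb X}(a,b,c)$ lying in $\Delta(a,b,c)$ rather than merely to some conformal self-map of the universal cover. The clean way to handle this is to pass to the smooth locus, i.e.\ work over ${\mathbb X}(a,b,c)$ minus the orbits of the cone points, where the action is free and ordinary covering-space theory applies directly; the lift of $\phi$ there is an element of $\Delta(a,b,c)$ normalizing $K$, and one then checks it extends continuously (hence holomorphically, by removable singularities) across the cone-point orbits. A subtle point in the non-compact setting, which is exactly where the earlier locally-finite-branched-cover and dessin-subgroup conditions earn their keep, is that $K$ need not have finite index, so one must verify the lift is an isometry of ${\mathbb X}(a,b,c)$ and not just a biholomorphism; this follows because every conformal automorphism of ${\mathbb H}$ (or ${\mathbb C}$, or $\widehat{\mathbb C}$) covering an orbifold isometry is itself an isometry, i.e.\ a M\"obius transformation preserving the relevant metric. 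Once the lift is secured, the identification of orientation-reversing automorphisms with the coset $N_{\overline{\Delta}}(K)/K$ outside $N_{\Delta}(K)/K$, and the index-at-most-two statement, follow formally from the index-two inclusion $\Delta(a,b,c) < \overline{\Delta}(a,b,c)$ already recorded in the paragraph preceding the theorem.
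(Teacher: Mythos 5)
The paper never proves Theorem~\ref{autgroups}: it is quoted from \cite{GiGo}*{Theorem 4.43} and \cite{J-W}*{\S 3.3.2} with no argument given in the text, so there is no internal proof to compare you against. What you propose is, in outline, the standard uniformization-theoretic argument of those references: realize ${\rm Aut}^{+}(S,\beta)$ as the descents of elements of $\Delta(a,b,c)$ that normalize $K$, check that the kernel of the descent map $N_{\Delta(a,b,c)}(K)\to {\rm Aut}^{+}(S,\beta)$ is exactly $K$, and repeat inside $\overline{\Delta}(a,b,c)$ for anticonformal automorphisms, using that the reflections of $\overline{\Delta}(a,b,c)$ descend to $J(z)=\overline{z}$, the unique anticonformal automorphism of $\widehat{\mathbb C}$ fixing $0,1,\infty$. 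This is the right proof, and your kernel computation ($g_{1}, g_{2}$ induce the same map iff $g_{1}g_{2}^{-1}\in K$) is correct.

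Two steps need sharpening. The step you yourself flag as the main obstacle is the crux, but your resolution is too quick: after deleting the cone-point orbits, the covering $p\colon {\mathbb X}^{\circ}\to S^{\circ}:=S\setminus\beta^{-1}(\{0,1,\infty\})$ has total space ${\mathbb X}^{\circ}$ which is \emph{not} simply connected, so ``ordinary covering-space theory'' does not automatically lift a homeomorphism of $S^{\circ}$; you must verify the lifting criterion, and this is exactly where $\beta\circ\varphi=\beta$ enters. Concretely: ${\mathbb X}^{\circ}\to {\mathcal O}^{\circ}:=\widehat{\mathbb C}\setminus\{0,1,\infty\}$ is a \emph{regular} covering with deck group $\Delta(a,b,c)$, so the subgroup $H:=(\beta\circ p)_{*}\pi_{1}({\mathbb X}^{\circ})$ of $\pi_{1}({\mathcal O}^{\circ})$ is normal, hence independent of basepoint in a fiber; from $\beta\circ(\varphi\circ p)=\beta\circ p$ and injectivity of $\beta_{*}$ one gets $(\varphi\circ p)_{*}\pi_{1}({\mathbb X}^{\circ})=p_{*}\pi_{1}({\mathbb X}^{\circ})$ inside $\pi_{1}(S^{\circ})$, so the lift $\widetilde{\varphi}$ exists; since it covers ${\rm id}_{{\mathcal O}^{\circ}}$, it is a deck transformation of ${\mathbb X}^{\circ}\to{\mathcal O}^{\circ}$, i.e., an element of $\Delta(a,b,c)$. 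Second, your worry about ``isometry versus biholomorphism'' is a red herring: what places $\widetilde{\varphi}$ in $\Delta(a,b,c)$ is deck-group membership (covering the identity of the base), not any metric statement — indeed in the Euclidean case $\mathbb{C}$ not every conformal automorphism is an isometry, yet the argument is unaffected. Likewise, once $\widetilde{\varphi}$ is known to be a M\"obius transformation in $\Delta(a,b,c)$ it is already globally defined on ${\mathbb X}(a,b,c)$, so no removable-singularity extension is needed: the descent of $\widetilde{\varphi}$ to $S$ agrees with $\varphi$ on the dense open set $S^{\circ}$, hence equals $\varphi$ everywhere by continuity.
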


As non-finite triangular groups admit proper and cocompact actions on both the real and hyperbolic planes, Propoposition \ref{pro:acts_properly} implies the following.

\begin{corollary}\label{unfin}
The ends space of the triangular group $\triangle(a,b,c)$, where $a^{-1}+b^{-1}+c^{-1} \leq 1$,  has one end. Finite triangular groups have no ends. 
\end{corollary}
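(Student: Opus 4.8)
The plan is to deduce everything from the \v{S}varc--Milnor Lemma (Proposition \ref{pro:acts_properly}), so that counting the ends of $\Delta(a,b,c)$ is reduced to counting the ends of the model surface $\mathbb{X}(a,b,c)$ on which it acts by isometries. Recall that, by construction, $\mathbb{X}(a,b,c)$ equals $\mathbb{H}$ when $a^{-1}+b^{-1}+c^{-1}<1$, equals $\mathbb{C}$ when $a^{-1}+b^{-1}+c^{-1}=1$, and equals $\widehat{\mathbb{C}}$ when $a^{-1}+b^{-1}+c^{-1}>1$; in the first two cases the group is infinite, and in the last it is finite. I would first record that in every case $\Delta(a,b,c)$ is finitely generated (the two elements $x,y$ suffice), so that the hypotheses of Proposition \ref{pro:acts_properly} are meaningful.

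For the two non-finite cases I would equip $\mathbb{X}(a,b,c)$ with its hyperbolic, respectively Euclidean, metric and note that this makes it a proper geodesic metric space: closed balls are compact and any two points are joined by a geodesic. The triangle group acts by isometries, the action is properly discontinuous because $\Delta(a,b,c)$ is discrete, and it is cocompact because a fundamental domain is obtained by doubling the defining triangle with angles $\pi/a,\pi/b,\pi/c$. The key point is that this triangle is \emph{compact}: since $a^{-1}+b^{-1}+c^{-1}\le 1$ forces $a,b,c\ge 2$, all three angles are strictly positive and the triangle has no ideal vertices, so its double is a compact fundamental polygon. Hence the action is proper and cocompact, and Proposition \ref{pro:acts_properly} gives that $\Delta(a,b,c)$ and $\mathbb{X}(a,b,c)$ have the same number of ends. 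Finally, both $\mathbb{H}$ and $\mathbb{C}$ are homeomorphic to $\mathbb{R}^2$ and have exactly one end, since the complement of any sufficiently large closed metric ball is connected; this yields the first assertion.

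For the finite (spherical) case, where $a^{-1}+b^{-1}+c^{-1}>1$, the model space $\mathbb{X}(a,b,c)=\widehat{\mathbb{C}}$ is the sphere and $\Delta(a,b,c)$ is a finite group. Its Cayley graph with respect to any finite generating set is then a finite, hence compact, graph, so no nested sequence of non-empty open sets escaping to infinity as in Subsection \ref{subsection:ends} can exist; consequently ${\rm Ends}(\Delta(a,b,c))=\emptyset$, which is exactly the zero-ended alternative of the general classification of ends of finitely generated groups. This establishes the second assertion.

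The only genuinely delicate points are the verification that the defining triangle is compact, so that the action is cocompact rather than merely of finite covolume, and the computation that the model planes are one-ended; both are where I would spend care. By contrast, properness, discreteness, and the finite-generation hypothesis are immediate, so the main obstacle is purely the geometric bookkeeping needed to place the situation squarely inside the hypotheses of Proposition \ref{pro:acts_properly}.
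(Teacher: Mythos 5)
Your proof is correct and takes essentially the same route as the paper, which likewise deduces the corollary in one line from the \v{S}varc--Milnor Lemma (Proposition \ref{pro:acts_properly}) applied to the proper, cocompact action of the non-finite triangle groups on the hyperbolic or Euclidean plane. The additional details you verify---compactness of the fundamental triangle, one-endedness of $\mathbb{H}$ and $\mathbb{C}$, and the empty ends space of a finite group---are precisely the bookkeeping the paper leaves implicit.
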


\begin{remark}\label{remark:fines}
As a consequence of the above and the results in \cite{ARV}, 
the ends space of the automorphims group of a regular map has at most one end.
Let $\triangle(a,b,c)$ be a Fuchsian triangle group and $K\lhd \triangle(a,b,c)$ such that $[\triangle(a,b,c):K]=\infty$. Then the group $\triangle(a,b,c) /K$ has only one end (see \cite{Serre}*{Example 6.3.2, p. 60}), in particular, the surface $\mathbb{H}^2/ K$ has one end.
\end{remark}

\section{On the Loch Ness monster}
In this section we describe Riemann surface structures on the Loch Ness monster (LNM),  coming as: (i) homology covers of compact hyperbolic Riemann surfaces, (ii) quotients of certain infinite index subgroups of ${\rm PSL}_{2}({\mathbb Z})$, and (iii) affine plane curves (infinite hyperelliptic and superelliptic  curves).

\subsection{Hyperbolic structures from compact surfaces}
In this subsection, we observe that to the LNM can be endowed of infinitely many Riemann surface structures coming from infinite index characteristic subgroups of the fundamental group of a compact Riemann surface of genus at least two (see Proposition \ref{caracteristico}).

\begin{proposition}\label{caracteristico}
Let $\Gamma$ be a co-compact Fuchsian group of genus $g \geq 2$ and let $K$ be a characteristic subgroup of $\Gamma$ such that $[\Gamma:K]=\infty$. Then ${\mathbb H}^{2}/K$ is topologically the LNM. 
\end{proposition}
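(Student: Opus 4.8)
The plan is to realise $S={\mathbb H}^2/K$ as a regular covering of the closed hyperbolic surface $X={\mathbb H}^2/\Gamma$ of genus $g\ge 2$ and to pin down its two topological invariants, so that the classification theorem for non-compact surfaces of Section 2 identifies it. Since a characteristic subgroup is in particular normal, and since $K$ is torsion-free (being a subgroup of the torsion-free group $\Gamma$), the quotient $S$ is a smooth surface, $p\colon S\to X$ is a normal covering, and its deck group is $Q=\Gamma/K$; this group is finitely generated, as a quotient of the finitely generated group $\Gamma$, and infinite because $[\Gamma:K]=\infty$. It therefore suffices to prove that $S$ has exactly one end and infinite genus. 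I would organise the argument so that the end count is established first and the genus is deduced from it, together with a finiteness obstruction coming from the normal subgroup $K$.

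For the number of ends I would invoke the \v{S}varc-Milnor lemma: the infinite group $Q$ acts properly discontinuously, cocompactly and by isometries on $S$, because $S/Q=X$ is compact, so by Proposition \ref{pro:acts_properly} the number of ends of $S$ equals the number of ends of $Q$. As $Q$ is infinite it has one, two or infinitely many ends (Theorem \ref{unnfinal}), and the task is to discard the last two possibilities; this is exactly where I would use that $K$ is \emph{characteristic}. The group ${\rm Aut}(\Gamma)$ preserves $K$, hence acts on $Q$ and, compatibly, on $H_1(X;{\mathbb Z})=\Gamma^{\mathrm{ab}}={\mathbb Z}^{2g}$, where its image is the full symplectic group ${\rm Sp}(2g,{\mathbb Z})$, acting ${\mathbb Q}$-irreducibly. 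Consequently the image $\overline K$ of $K$ in ${\mathbb Z}^{2g}$ is an ${\rm Sp}(2g,{\mathbb Z})$-invariant subgroup, so it is either $\{0\}$ or of finite index; in the first case $Q^{\mathrm{ab}}={\mathbb Z}^{2g}$ has rank $2g\ge 4$, which is incompatible with $Q$ being virtually cyclic, so $Q$ cannot have two ends.

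Granting for the moment that $Q$, and hence $S$, has a single end (the delicate point, discussed below), I would obtain the genus from Greenberg's theorem on Fuchsian groups: a nontrivial finitely generated normal subgroup of $\Gamma$ has finite index, so the nontrivial (we may assume $K\ne\{1\}$, since $K=\{1\}$ yields the plane ${\mathbb H}^2$) infinite-index normal subgroup $K$ must be infinitely generated. Thus $\pi_1(S)=K$ is infinitely generated and $S$ is of infinite topological type. A one-ended surface of finite genus, however, is homeomorphic to a once-punctured closed surface and therefore has finitely generated fundamental group; this contradiction forces the genus of $S$ to be infinite. Having infinite genus and exactly one end, $S$ is the Loch Ness monster by the classification theorem.

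The hard part will be excluding that $Q$ has infinitely many ends, and this is not resolved by the homological dichotomy above: even when $\overline K=\{0\}$ the abelianisation ${\mathbb Z}^{2g}$ is also the abelianisation of a free group, so Stallings' theorem still permits a nontrivial splitting of $Q$ over a finite subgroup. The route I would pursue is canonicity: such a splitting is governed by the Stallings--Dunwoody accessibility decomposition over finite subgroups, which is natural, so the ${\rm Aut}(\Gamma)$-action on $Q$ would preserve it and, transported to $H_1$, would yield a proper ${\rm Sp}(2g,{\mathbb Z})$-invariant structure, contradicting irreducibility; the finite-index case $\overline K\ne\{0\}$ should reduce to this after passing to the intermediate finite cover corresponding to $[\Gamma,\Gamma]K$. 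Alternatively, one may seek to apply the surface-group analogue of the triangle-group statement recorded in Remark \ref{remark:fines}, to the effect that a characteristic infinite-index quotient of a cocompact surface group is one-ended.
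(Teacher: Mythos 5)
Your \v{S}varc--Milnor reduction (Proposition \ref{pro:acts_properly}) and your Greenberg argument for infinite genus are sound, and you are right that $K=\{1\}$ must be excluded; but the proof has a genuine gap exactly at the point you flag: one-endedness of $Q=\Gamma/K$ is never established, and neither of your proposed routes closes it. First, your homological dichotomy does not even dispose of the \emph{two}-ended case: when $\overline{K}$ has finite index in ${\mathbb Z}^{2g}$ --- which genuinely occurs, e.g.\ $K=\Gamma^{k}$ with $k\geq 3$, where $Q^{\mathrm{ab}}\cong ({\mathbb Z}_{k})^{2g}$ is finite --- the rank obstruction is silent, and a finite abelianization is perfectly compatible with $Q$ being two-ended of dihedral type (note that ${\mathbb Z}_{2}\ast{\mathbb Z}_{2}$ in Theorem \ref{unnfinal} has finite abelianization). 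Second, for infinitely many ends: $Q$ is finitely generated but need not be finitely presented (by Greenberg, $K$ is infinitely generated), so Dunwoody accessibility is not guaranteed to apply --- finitely generated inaccessible groups exist --- and even granting a canonical decomposition, your ``transport to $H_{1}$'' is unjustified: the splitting is a splitting of $Q$, not of $\Gamma$, and precisely in the finite-index case $Q^{\mathrm{ab}}$ is finite, so homology sees nothing of it. Finally, your fallback is circular: the ``surface-group analogue of Remark \ref{remark:fines} for characteristic subgroups'' is, modulo \v{S}varc--Milnor, the proposition itself, while the analogue for merely \emph{normal} subgroups is false --- a genus-$g$ surface group surjects onto the free group $F_{g}$, so an infinite-index normal subgroup can have a quotient with infinitely many ends. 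Any correct proof must therefore use characteristicity in an essential, non-homological way.

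The paper does this by changing the ambient group rather than analyzing $Q$ directly. For each $g\geq 2$ there is a quasiplatonic surface group of genus $g$: a torsion-free finite-index normal subgroup $F\lhd \Delta(a,b,c)$ of a triangle group with ${\mathbb H}^{2}/F$ compact of genus $g$ (existence of a regular Belyi pair), and there is a homeomorphism $h$ of ${\mathbb H}^{2}$ conjugating $\Gamma$ to $F$. Since isomorphisms carry characteristic subgroups to characteristic subgroups, $hKh^{-1}$ is characteristic in $F$, hence \emph{normal of infinite index in the triangle group} $\Delta(a,b,c)$ --- this is the only step where the hypothesis is consumed. Triangle groups have Serre's property FA: in any action on a tree the generators $x$, $y$ and the product $xy$ have finite order, so each fixes a point, and then $\langle x,y\rangle$ has a global fixed point; FA passes to quotients, and an infinite finitely generated FA group has exactly one end, since two or infinitely many ends would yield a fixed-point-free action on a tree. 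This is the fact recorded in Remark \ref{remark:fines} via \cite{Serre}, and together with Corollary \ref{unfin} and the results of \cite{ARV} it identifies ${\mathbb H}^{2}/hKh^{-1}$, hence its homeomorphic copy ${\mathbb H}^{2}/K$, as the LNM. If you graft this triangle-group step into your argument, your \v{S}varc--Milnor reduction plus Greenberg would then recover the infinite genus without citing \cite{ARV}.
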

\begin{proof}
For each genus $g \geq 2$ there is a regular Belyi pair $(S,\beta)$, where $S$ is a compact Riemann surface of genus $g$. Let $G<{\rm Aut}(S)$ be the deck group of $\beta$, and
let $\Delta(a,b,c)$ be a Fuchsian triangular group such that 
$S/G={\mathbb H}^{2}/\Delta(a,b,c)$. Then there is a normal (torsion free) subgroup $F$ of the triangular group $\Delta(a,b,c)$ such that $S={\mathbb H}^{2}/F$ and $G=\Delta(a,b,c)/F$. Moreover, there exists an orientation preserving homeomorphism $h:{\mathbb H}^{2} \to {\mathbb H}^{2}$ conjugating $\Gamma$ to $F$. The homeomorphism $h$ induces an orientation preserving homeomorphism between ${\mathbb H}^{2}/K$ and ${\mathbb H}^{2}/hKh^{-1}$. As $hKh^{-1}$ is a characteristic subgroup of $F$, it is a normal subgroup of the triangular group $\Delta(a,b,c)$, so the result follows from Corollary \ref{unfin} and Remark \ref{remark:fines}.
\end{proof}

\subsubsection{The Loch Ness monster as a homology cover}
Let $K=\Gamma' \lhd \Gamma$ be the derived subgroup of a co-compact Fuchsian group $\Gamma$ of genus $g \geq 2$. Set $S_{\Gamma}={\mathbb H}/\Gamma$, a compact Riemann surface of genus $g$, and $S_{K}={\mathbb H}/K$, a non-compact Riemann surface (this is called the homology cover of $S_{\Gamma}$). As a consequence of Proposition \ref{caracteristico}, we obtain the following.

\begin{corollary}\label{homologia}
The homology cover of a compact Riemann surface of genus $g \geq 2$ is topologically equivalent to the LNM.
\end{corollary}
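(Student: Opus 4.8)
The plan is to read this off directly from Proposition \ref{caracteristico} by checking its two hypotheses for the specific subgroup $K = \Gamma' = [\Gamma,\Gamma]$. Let $S$ be a compact Riemann surface of genus $g \geq 2$, and uniformize it as $S = \mathbb{H}/\Gamma$ for a co-compact Fuchsian group $\Gamma$ of genus $g$. By definition the homology cover is $\mathbb{H}/K$ with $K = \Gamma'$, so it suffices to verify that $\Gamma'$ is a characteristic subgroup of $\Gamma$ of infinite index; the topological conclusion will then be immediate from Proposition \ref{caracteristico}.

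First I would recall the standard fact that the derived subgroup is characteristic (indeed fully invariant): for any $\phi \in \mathrm{Aut}(\Gamma)$ and any commutator $[a,b]$ one has $\phi([a,b]) = [\phi(a),\phi(b)] \in \Gamma'$, so $\phi(\Gamma') \subseteq \Gamma'$, and applying this to $\phi^{-1}$ as well gives $\phi(\Gamma') = \Gamma'$. Hence $\Gamma'$ is invariant under every automorphism of $\Gamma$, which is exactly the characteristic condition required by Proposition \ref{caracteristico}.

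Second, I would check that $[\Gamma : \Gamma'] = \infty$. The quotient $\Gamma/\Gamma'$ is the abelianization of the genus-$g$ surface group, i.e. $\Gamma/\Gamma' \cong H_1(S;\mathbb{Z}) \cong \mathbb{Z}^{2g}$, which is infinite for $g \geq 2$ (even for $g \geq 1$). Thus $K = \Gamma'$ has infinite index in $\Gamma$, the second hypothesis of Proposition \ref{caracteristico} holds, and that proposition yields that $\mathbb{H}^2/K$ is topologically the LNM, as claimed.

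I do not expect any genuine obstacle here, since the statement is a direct corollary: the entire analytic and topological content (the passage through a regular Belyi pair, the triangular group $\Delta(a,b,c)$, and the end-counting via Corollary \ref{unfin} and Remark \ref{remark:fines}) is already packaged inside Proposition \ref{caracteristico}. The only substantive input is the classical computation of the abelianization $\Gamma^{\mathrm{ab}} \cong \mathbb{Z}^{2g}$, which follows at once from the standard presentation $\Gamma = \langle a_1,b_1,\ldots,a_g,b_g \mid \prod_{i=1}^{g}[a_i,b_i] = 1\rangle$, because the single defining relator is a product of commutators and so becomes trivial upon abelianizing.
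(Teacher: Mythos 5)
Your proposal is correct and follows exactly the paper's route: the paper also obtains this corollary by applying Proposition \ref{caracteristico} to $K=\Gamma'$, merely leaving implicit the two verifications you spell out (that $\Gamma'$ is characteristic, being fully invariant, and that $[\Gamma:\Gamma']=\infty$ since $\Gamma/\Gamma'\cong\mathbb{Z}^{2g}$). Your write-up is simply a more detailed version of the paper's one-line deduction, and both checks are carried out correctly.
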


\begin{remark}
The above result can also be obtained as follows.
The group $G=\Gamma/K \cong {\mathbb Z}^{2g}$, has exactly one end (see \cite{ScottWall}*{Corollary 5.5}), and acts as a group of holomorphic automorphisms of $S_{K}$ such that $S_{\Gamma}=S_{K}/G$. 
Let $P \subset {\mathbb H}$ be a canonical fundamental polygon for $\Gamma$ (it has $4g$ sides and its pairing sides are identified by a set $A_{1}, \ldots, A_{g}, B_{1}, \ldots, B_{g} \in \Gamma$, of generators of $\Gamma$ such that $\prod_{j=1}^{g} [A_{j},B_{j}]=1$). The $\Gamma$-translates of $P$ induces a tessellation on ${\mathbb H}$ and it descends to a tessellation on $S_{K}$. The dual graph of such an induced tessellation is the Cayley graph of $G$, with respect to the induced $2g$ generators by the elements $A_{j}, B_{j}$. As ${\mathbb Z}^{2g}$, $g \geq 2$, has one end, it follows that the number of ends of $S_{K}$ must be one.
\end{remark}

It is well-known that the homology cover determines the compact Riemann surface, more precisely:

\begin{theorem}[\cite{Maskit:homology}*{p. 561}]
Let $\Gamma_{1}$ and $\Gamma_{2}$ be two co-compact Fuchsian groups. If $\Gamma'_{1}=\Gamma'_{2}$, then $\Gamma_{1}=\Gamma_{2}$. In particular, two compact Riemann surfaces, both of genus at least two, are isomorphic if and only if their homology covers are isomorphic.
\end{theorem}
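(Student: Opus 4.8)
The plan is to push both groups into a single ambient Fuchsian group and then to reduce the statement to an intrinsic recognition of the deck lattice inside the automorphism group of the homology cover. Write $K=\Gamma_1'=\Gamma_2'$ and focus first on the torsion-free (surface group) case central to the paper, remarking at the end that the general co-compact case is handled identically with orbifold Euler characteristics. Since $\Gamma_1$ is co-compact, hence non-elementary, so is $K=\Gamma_1'$; consequently its normaliser $N:=N_{{\rm PSL}_2({\mathbb R})}(K)$ is again a Fuchsian group, because the normaliser of a non-elementary discrete subgroup of ${\rm PSL}_2({\mathbb R})$ is discrete (it preserves the infinite limit set $\Lambda(K)$). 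As $K\lhd\Gamma_i$, both $\Gamma_1$ and $\Gamma_2$ sit inside $N$ with finite index, each containing $K$ normally and with $\Gamma_i/K\cong H_1({\mathbb H}/\Gamma_i;{\mathbb Z})\cong{\mathbb Z}^{2g_i}$. Passing to $\overline N:=N/K$, which is exactly the group ${\rm Aut}(\Sigma)$ of holomorphic automorphisms of the homology cover $\Sigma={\mathbb H}/K$, the theorem becomes the assertion that the two finite-index abelian subgroups $\overline\Gamma_i:=\Gamma_i/K$ of $\overline N$ coincide. (Intersecting $\overline\Gamma_1$ and $\overline\Gamma_2$ already forces $g_1=g_2=:g$.)

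First I would show that each $\overline\Gamma_i$ is a \emph{maximal} abelian subgroup of $\overline N$. Indeed, if $\overline\Gamma_1\le\overline A$ with $\overline A$ abelian and $A\le N$ its preimage, then $A\supseteq\Gamma_1$ and $A/K=\overline A$ abelian forces $A'\le K$, while $A'\supseteq\Gamma_1'=K$; hence $A'=K$ and $A$ is again co-compact Fuchsian with the same derived subgroup. Now ${\mathbb H}/\Gamma_1\to{\mathbb H}/A$ is a finite covering of closed surfaces of genus $\ge 2$: the inclusion $\overline\Gamma_1\le\overline A$ gives ${\rm rank}(\overline\Gamma_1)\le{\rm rank}(\overline A)$, i.e. $g(\Gamma_1)\le g(A)$, whereas a nontrivial such covering strictly increases the genus, giving $g(\Gamma_1)>g(A)$. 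The two inequalities are incompatible unless the covering is trivial, so $A=\Gamma_1$ and $\overline\Gamma_1=\overline A$. The same argument applies to $\overline\Gamma_2$.

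The hard part will be upgrading ``maximal abelian of finite index'' to genuine uniqueness, since ${\rm Aut}(\Sigma)$ is in general strictly larger than any single deck lattice (for example the hyperelliptic involution of a genus-two base lifts to $\Sigma$), so a priori $\overline N$ could contain several distinct maximal abelian finite-index subgroups. The extra input that must be used is the sharp condition $\Gamma_i'=K$, i.e. that $K$ is generated by the commutators of $\Gamma_i$. I would encode this through the action of $\overline\Gamma_i$ on $H_1(\Sigma;{\mathbb Z})=K/K'$ regarded as a module over ${\mathbb Z}[\overline\Gamma_i]\cong{\mathbb Z}[{\mathbb Z}^{2g}]$: the equality $\Gamma_i'=K$ is precisely the statement that the coinvariants $H_0(\overline\Gamma_i;K/K')$ are spanned by the classes of the commutators of lifts, a rigid module-theoretic fingerprint that should pin the lattice down. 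Concretely I would aim to prove that $\Gamma_i$ is normal in $N$ and that a finite-index normal maximal-abelian subgroup of $\overline N$ satisfying this full-commutator condition is unique; this is where I expect the essential work, and where Maskit's more careful geometric analysis enters.

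Finally, the ``in particular'' clause follows formally once the first part is in hand. Two compact surfaces $S_i={\mathbb H}/\Gamma_i$ are isomorphic if and only if $\Gamma_1,\Gamma_2$ are conjugate in ${\rm PSL}_2({\mathbb R})$, and their homology covers $\Sigma_i={\mathbb H}/\Gamma_i'$ are isomorphic if and only if $\Gamma_1',\Gamma_2'$ are conjugate. If $g\Gamma_1'g^{-1}=\Gamma_2'$ for some $g\in{\rm PSL}_2({\mathbb R})$, then $g\Gamma_1 g^{-1}$ is co-compact Fuchsian with derived subgroup $(g\Gamma_1 g^{-1})'=g\Gamma_1'g^{-1}=\Gamma_2'$, so the first part gives $g\Gamma_1 g^{-1}=\Gamma_2$; hence isomorphic homology covers force isomorphic bases, and the converse is immediate by conjugating. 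This closes the equivalence.
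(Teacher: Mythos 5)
The paper gives no proof of this statement at all---it is quoted as a known result from Maskit's paper (Toh\^oku Math. J. \textbf{38} (1986), p.~561)---so your proposal has to stand on its own, and it does not: it has a genuine gap exactly at the step you yourself flag as ``the hard part.'' Your reduction is sound as far as it goes. Since $K=\Gamma_1'=\Gamma_2'$ is a nontrivial normal subgroup of a non-elementary Fuchsian group it is non-elementary, its normalizer $N$ is Fuchsian and co-compact, both $\Gamma_i$ have finite index in $N$, and your maximality lemma (each $\overline\Gamma_i=\Gamma_i/K$ is maximal abelian in $\overline N=N/K$, via $A'=K$ together with Riemann--Hurwitz) is correct modulo the orbifold bookkeeping you defer. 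But ``maximal abelian of finite index'' is genuinely weaker than ``unique'': a group can contain several distinct maximal abelian finite-index subgroups (for instance $\mathbb{Z}\times S_3$ contains both $\mathbb{Z}\times A_3$ and $\mathbb{Z}\times\langle(12)\rangle$, maximal abelian of indices $2$ and $3$), and $\overline N$ does carry torsion in general---your own hyperelliptic-involution remark---so nothing you have proved excludes $\overline\Gamma_1\neq\overline\Gamma_2$. The step that would close this, namely that the \emph{full}-commutator hypothesis $\Gamma_i'=K$ (as opposed to the mere inclusion $\Gamma_i'\subseteq K$, which only says $\overline\Gamma_i$ is abelian) forces the two lattices to coincide, is precisely the content of Maskit's theorem; you state it as a hope (``a rigid module-theoretic fingerprint that should pin the lattice down \dots this is where Maskit's more careful geometric analysis enters'') rather than prove it. Note also that the normality of $\Gamma_i$ in $N$, which you propose as an intermediate goal, is essentially equivalent to the theorem itself: for $n\in N$ the conjugate $n\Gamma_1n^{-1}$ is a co-compact Fuchsian group with the same derived subgroup $K$, so normality is exactly the conclusion applied to this pair, and it cannot be assumed without circularity or an independent argument.

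What does work and is worth keeping: the framing inside $N(K)$, the identification $\overline N\cong{\rm Aut}({\mathbb H}/K)$ in the torsion-free case, the deduction $g_1=g_2$, the maximality lemma, and the formal derivation of the ``in particular'' clause from the group-theoretic statement (lifting a biholomorphism of homology covers to a M\"obius conjugation $g$ with $g\Gamma_1'g^{-1}=\Gamma_2'$, then applying the first part to $g\Gamma_1g^{-1}$ and $\Gamma_2$) are all correct. As it stands, though, the proposal is a correct reduction plus an unproven core, not a proof: no argument presented actually uses the equality $\Gamma_i'=K$ beyond what the weaker inclusion already gives.
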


\subsubsection{The homology cover of LNM}
Let $K=(\Gamma')' \lhd \Gamma$ be the double  derived subgroup of a co-compact Fuchsian group $\Gamma$ of genus $g \geq 2$. In the previous case, we have seen that $\mathbb{H}^2/\Gamma'$ is topologically equivalent to the LNM. As $K$ is characteristic subgroup of $\Gamma$, by Proposition  \ref{caracteristico}, we also have that $\mathbb{H}^2/ K$ is topologically equivalent to LNM. As a consequence, we obtain the following.

\begin{corollary}
The homology cover of the LNM  is topologically equivalent to the LNM.
\end{corollary}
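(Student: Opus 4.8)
The plan is to realise both surfaces through subgroups of a single co-compact torsion-free Fuchsian group and then invoke Proposition~\ref{caracteristico}. I would fix a surface group $\Gamma$ of genus $g\geq 2$, so that $S_{\Gamma}=\mathbb{H}^2/\Gamma$ is a compact Riemann surface of genus $g$ and, by Corollary~\ref{homologia}, its homology cover $\mathbb{H}^2/\Gamma'$ is topologically the LNM. Since $\Gamma$ is torsion-free, so is $\Gamma'$, and hence $\Gamma'$ is the fundamental group of this copy of the LNM. The homology cover of a surface is, by definition, the regular cover associated to the derived subgroup of its fundamental group; applied to $\mathbb{H}^2/\Gamma'$ this is the cover associated to $(\Gamma')'=K$, namely $\mathbb{H}^2/K$. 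The problem therefore reduces to showing that $\mathbb{H}^2/K$ is topologically the LNM.

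The key step is to verify the two hypotheses of Proposition~\ref{caracteristico} for $K<\Gamma$. For the index, I would note that $\Gamma/\Gamma'\cong\mathbb{Z}^{2g}$ is already infinite, whence $[\Gamma:K]\geq[\Gamma:\Gamma']=\infty$. For characteristicity, I would use that each term of the derived series is a fully invariant (in particular characteristic) subgroup: thus $\Gamma'$ is characteristic in $\Gamma$ and $(\Gamma')'$ is characteristic in $\Gamma'$. The point worth stressing is that the characteristic property is transitive — a characteristic subgroup of a characteristic subgroup is again characteristic, since any automorphism of $\Gamma$ restricts to an automorphism of $\Gamma'$ and hence preserves $(\Gamma')'$. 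Consequently $K=(\Gamma')'$ is characteristic in $\Gamma$, not merely normal in $\Gamma'$. With both hypotheses in hand, Proposition~\ref{caracteristico} gives that $\mathbb{H}^2/K$ is topologically the LNM, which completes the argument.

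The only genuine subtlety — and the place where care is needed — is precisely this transitivity. Normality is not transitive, so $K\lhd\Gamma'\lhd\Gamma$ does not by itself force $K\lhd\Gamma$, and the reduction to Proposition~\ref{caracteristico} would break down if the derived subgroup were merely normal. What rescues the argument is that the derived subgroups are fully invariant, which upgrades normality to characteristicity and makes the passage from $\Gamma'$ up to $\Gamma$ automatic. Everything else is bookkeeping: identifying the homology cover with the derived-subgroup cover, and observing that infinite index in $\Gamma$ is inherited from $\Gamma'$.
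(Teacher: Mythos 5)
Your proposal is correct and follows essentially the same route as the paper: both take $K=(\Gamma')'$ for a co-compact Fuchsian group $\Gamma$ of genus $g\geq 2$, observe that $K$ is characteristic in $\Gamma$ of infinite index, and apply Proposition~\ref{caracteristico}. The paper states the characteristicity of $K$ in $\Gamma$ without justification, whereas you spell out the transitivity argument and the identification of $\mathbb{H}^2/K$ with the homology cover of $\mathbb{H}^2/\Gamma'$; these are worthwhile details but not a different proof.
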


\subsubsection{Another example of characteristic subgroups}

Another characteristic subgroup of $\Gamma$ is given by the subgroup $\Gamma^{k}$ generated by all the $k$-powers of its elements, where $k \geq 3$. 
In this case, \[
G=\Gamma/\Gamma^{k}
=\langle x_{1},\ldots, x_{g}, y_{1},\ldots, y_{g}: x_{1}^{k}=\cdots=x_{g}^{k}=y_{1}^{k}=\cdots=y_{g}^{k}=\prod_{j=1}^{g}[x_{j},y_{j}]=1\rangle
\]
and $\Gamma^{k}$ has infinite index in $\Gamma$. So, by the Proposition \ref{caracteristico} we obtain the following.

\begin{corollary}
Let $\Gamma$ be a torsion-free co-compact Fuchsian group of genus $g \geq 2$. For each $k \geq 3$, let $\Gamma^k$ be the characteristic subgroup generated by all the $k$-powers of the elements of $\Gamma$. Then ${\mathbb H}/\Gamma^{k}$ is topologically equivalent to the LNM.
\end{corollary}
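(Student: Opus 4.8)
The plan is to reduce the statement to Proposition \ref{caracteristico}: once we know that $\Gamma^{k}$ is a characteristic subgroup of $\Gamma$ of infinite index, that proposition gives directly that $\mathbb{H}^{2}/\Gamma^{k}$ is the LNM. So the proof splits into two verifications, namely that $\Gamma^{k}$ is characteristic and that $[\Gamma:\Gamma^{k}]=\infty$, after which I would simply invoke Proposition \ref{caracteristico} (which itself runs through Corollary \ref{unfin} and Remark \ref{remark:fines}).

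The first verification is routine. Since $\Gamma^{k}$ is the subgroup generated by all $k$-th powers, it is a verbal, hence fully invariant, subgroup: every endomorphism $\varphi$ of $\Gamma$ satisfies $\varphi(w^{k})=\varphi(w)^{k}$, so $\varphi(\Gamma^{k})\subseteq\Gamma^{k}$, and applying this to an automorphism together with its inverse shows $\Gamma^{k}$ is characteristic. I would dispatch this in a single line.

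The second verification is the heart of the matter. I would establish infinite index by showing that the quotient $G=\Gamma/\Gamma^{k}$ surjects onto a manifestly infinite group. The natural target is the free product $\mathbb{Z}_{k}*\mathbb{Z}_{k}=\langle a,b:a^{k}=b^{k}=1\rangle$, which is infinite for every $k\geq 2$. Using the canonical generators $x_{1},y_{1},\ldots,x_{g},y_{g}$ of $\Gamma$, I would set $x_{1}\mapsto a$, $y_{1}\mapsto b$, $x_{2}\mapsto b$, $y_{2}\mapsto a$ and send all remaining generators to $1$; then the surface relation maps to $[a,b][b,a]=[a,b][a,b]^{-1}=1$, so the assignment extends to a surjective homomorphism. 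Since $\mathbb{Z}_{k}*\mathbb{Z}_{k}$ is infinite, so is $G$, whence $[\Gamma:\Gamma^{k}]=\infty$.

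The main obstacle, and the point I would scrutinise most, is the compatibility of these two steps. The surjection just described requires only that the \emph{generators} have order dividing $k$; that is, it factors through the normal closure of $\{x_{i}^{k},y_{i}^{k}\}$, which is exactly the group appearing in the displayed presentation of $G$. If instead $\Gamma^{k}$ is read literally as the verbal subgroup, then $G$ has exponent $k$, and for $k\in\{3,4,6\}$ Burnside's theorem forces $G$ to be \emph{finite}; in that case the map above does not factor through $G$ (the element $ab$ has infinite order), and infinite index fails. Thus the delicate issue is the precise meaning of $\Gamma^{k}$: the displayed presentation and the infinite-index claim correspond to the normal closure of the generator $k$-th powers, for which infinite index is easy but characteristicity is not automatic, whereas the verbal reading yields characteristicity for free but makes infinite index false for small $k$. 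I would therefore spend the bulk of the effort pinning down this interpretation, and in the normal-closure case either argue characteristicity directly (via the action of $\mathrm{Out}(\Gamma)$ on the relevant generating set) or bypass Proposition \ref{caracteristico} with a direct end count: $G$ acts properly and cocompactly on the hyperbolic surface $\mathbb{H}^{2}/\Gamma^{k}$, so by the \v{S}varc--Milnor Lemma (Proposition \ref{pro:acts_properly}) the number of ends of the surface equals the number of ends of $G$, reducing the claim to showing, via Theorem \ref{unnfinal}, that $G$ does not split over a finite subgroup and hence is one-ended.
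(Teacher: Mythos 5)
Your reconstruction of the intended argument is precisely the paper's proof: the paper notes that $\Gamma^{k}$ is characteristic, writes down the presentation $G=\Gamma/\Gamma^{k}=\langle x_{1},\ldots,y_{g}: x_{1}^{k}=\cdots=y_{g}^{k}=\prod_{j=1}^{g}[x_{j},y_{j}]=1\rangle$, declares the index infinite, and invokes Proposition~\ref{caracteristico}. The ``delicate issue'' you isolate is therefore not a defect of your write-up but a genuine error in the paper itself. The displayed presentation describes the quotient of $\Gamma$ by the normal closure $N$ of the $k$-th powers of the \emph{chosen generators}; the verbal subgroup $\Gamma^{k}$ contains $N$ but is in general strictly larger, and $\Gamma/\Gamma^{k}$ has exponent $k$. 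Being generated by $2g$ elements, $\Gamma/\Gamma^{k}$ is a quotient of the free Burnside group $B(2g,k)$, which is finite for $k\in\{3,4,6\}$ (Burnside, Sanov, M.~Hall). Hence for these values of $k$ the verbal subgroup has \emph{finite} index, $\mathbb{H}/\Gamma^{k}$ is a compact surface, and the corollary as literally stated is false --- the same phenomenon that presumably led the authors to exclude $k=2$. Your surjection $\Gamma\to\mathbb{Z}_{k}\ast\mathbb{Z}_{k}$ (which is correct) proves infinite index only for $N$, while characteristicity holds only for $\Gamma^{k}$; for small $k$ no single subgroup enjoys both properties. Under the verbal reading the statement survives exactly for those exponents $k$ with $\Gamma/\Gamma^{k}$ infinite (e.g.\ suitable large $k$, by Ivanov--Ol'shanskii type results on quotients of hyperbolic groups of bounded exponent), and in that range your first step together with Proposition~\ref{caracteristico} is a complete proof.

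Of your two suggested repairs under the normal-closure reading, the first cannot work: $N$ is not characteristic, indeed not even invariant under the single automorphism (a Dehn twist) $\varphi(x_{1})=x_{1}y_{1}$, $\varphi(y_{1})=y_{1}$, $\varphi$ fixing the remaining generators. This $\varphi$ preserves the surface relation because $[x_{1}y_{1},y_{1}]=[x_{1},y_{1}]$, yet composing with your surjection $q:\Gamma\to\mathbb{Z}_{k}\ast\mathbb{Z}_{k}$ gives $q(\varphi(x_{1}^{k}))=(ab)^{k}\neq 1$ while $q(N)=\{1\}$; so $\varphi(x_{1}^{k})\notin N$ although $x_{1}^{k}\in N$. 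The second repair (Proposition~\ref{pro:acts_properly} plus Stallings) is the right direction but has two gaps. First, one-endedness of $G=\Gamma/N$ requires a genuine argument: normality and infinite index of $N$ in $\Gamma$ are not sufficient (the normal closure of $x_{1}$ alone yields the quotient $\mathbb{Z}\ast\mathbb{Z}^{2}$, which has infinitely many ends), and for $g=2$ your $G$ is an amalgam of two copies of $\mathbb{Z}_{k}\ast\mathbb{Z}_{k}$ over the \emph{infinite} cyclic subgroup generated by the commutators, so Theorem~\ref{unnfinal} does not immediately rule out some other splitting over a finite subgroup. Second, one end does not characterize the LNM: you must also establish infinite genus. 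That part is easily patched --- the only one-ended planar surface is the plane, whose fundamental group is trivial, so $\mathbb{H}^{2}/N$ carries a handle, and the proper action of the infinite deck group $G$ produces infinitely many pairwise disjoint handles --- but it must be said, since the \v{S}varc--Milnor argument by itself only counts ends.
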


\begin{remark}[Dessins d'enfants on the Loch Ness monster]
Let $(S,\beta)$ be a hyperbolic regular Belyi pair of type $(a,b,c)$, where $S$ is a compact Riemann surface of genus $g \geq 2$.  This Belyi pair corresponds to a finite index torsion-free normal subgroup $\Gamma$ of the triangular group $\Delta(a,b,c)$, that is, $S={\mathbb H}/\Gamma$ and $\beta$ is induced by the normal inclusion $\Gamma \lhd \Delta(a,b,c)$. Let $K=\Gamma'$ be the derived subgroup of $\Gamma$. As seen above, the homology cover $S_{K}={\mathbb H}/K$ of $S$ is topologically equivalent to the LNM. 
As $K$ is characteristic subgroup of $\Gamma$ and the last is a normal subgroup of the triangular group $\Delta(a,b,c)$, then we obtain that  $K$ is also normal subgroup of it. 
The normal inclusion $K \lhd \Delta(a,b,c)$ induces a regular Belyi pair $(S_{K},\widetilde{\beta})$ of type $(a,b,c)$.  Then there is a regular unbranched covering $P:S_{K} \to S$, whose deck group is $\Gamma/K \cong {\mathbb Z}^{2g}$, such that $\widetilde{\beta}=\beta \circ P$. If, moreover, $S$ admits no anticonformal automorphism, then neither does $S_{K}$, in particular, if the Belyi pair $(S,\beta)$ is chiral, then the same happens with the Belyi pair $(S_{K},\widetilde{\beta})$. The above fact, together Proposition \ref{Proposition:Belyi-dessins}, asserts the existence of infinitely many regular dessins d'enfants (either chiral or reflexive) on the LNM.
\end{remark}

\subsection{Hyperbolic structures from subgroups of ${\rm PSL}_{2}({\mathbb Z})$} 
Let us consider the modular group ${\rm PSL}_{2}({\mathbb Z})$, which is generated by $F(z)=z+1$ and $E(z)=-1/z$. Inside this group there is  an index three non-normal subgroup $K_{0} \cong \mathbb{Z}\ast \mathbb{Z}_2$, generated by $A=F^{2}$ and $E$. A fundamental domain for $K_0$ is the geodesic triangle whose vertices are the points $-1$, $1$ and $\infty$ (see Figure \ref{F:Triangle_1}). The quotient orbifold $\mathbb{H}/ K_{0}$ is the punctured complex plane ${\mathbb C}\setminus\{\textbf{0}\}$  with exactly one cone point (say $1$) with cone order $2$.
\begin{center}
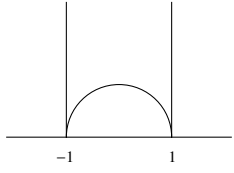
\begin{figure}[h!]
\begin{tikzpicture}[baseline=(current bounding box.north)]
\begin{scope}
    \clip (-1.5,0) rectangle (1.5,2.0);
    \draw (0,0) circle(0.7);
     \draw (-0.7,0) -- (-0.7,1.8);
      \draw (0.7,0) -- (0.7,1.8);
    \draw (-1.5,0) -- (1.5,0);
        \end{scope}
\node[below left= 1mm of {(-0.4,0)}] {\tiny{$-1$}};
\node[below left= 1mm of {(0.96,0)}] {\tiny{$1$}};
\end{tikzpicture} 
 \caption{\emph{Fundamental domain for $K_0$}}
   \label{F:Triangle_1}
\end{figure}
\end{center}

We will denote by $C_r(z)$ the half-circle with center $z \in\mathbb{C}$ and radius $r>0$. The M\"obius transformation $A^2E$ sends the half-circle $C_1(\textbf{0})$ onto $C_1(4)$ and, the  M\"obius transformation $A EA^{-3}$  sends the half-circle $C_1(6)$ onto $C_1(2)$. 

Note that 
\[
K_1=\langle A^4, A^{2}E, AEA^{-3}\rangle \cong \mathbb{Z}\ast \mathbb{Z}\ast \mathbb{Z},
\]
is a subgroup of index four of $K_0$.
A fundamental domain for $K_1$ is the  geodesic polygon with six sides whose vertices are the points $-1$, $1$, $3$, $5$, $7$ and $\infty$ (see Figure \ref{F:Triangle_2}), and 
the quotient orbifold $\mathbb{H}/ K_1$ is homeomorphic to the complex torus with two punctures.  We observe that the group $K_{1}$ is normalized by $A$, so it induces a holomorphic automorphism of order $4$ on the complex torus ${\mathbb H}/K_{1}$, such that each one of these two punctures is fixed. It follows that the compactification of the complex torus ${\mathbb H}/K_{1}$ is defined by the elliptic curve $y^{2}=x^{4}-1$.
\begin{center}
\begin{figure}[h!]
\begin{tikzpicture}[baseline=(current bounding box.north)]
\begin{scope}
    \clip (-1.5,0) rectangle (4.5,2.2);
    \draw (0,0) circle(0.5);
     \draw (1,0) circle(0.5);
     \draw (2,0) circle(0.5);
     \draw (3,0) circle(0.5);
     \draw (-0.5,0) -- (-0.5,2);
      \draw (3.5,0) -- (3.5,2);
    \draw (-1.5,0) -- (4.5,0);
    \draw[->] (0,1.82) -- (3,1.82);
        \node at (1.8,2.1) {\tiny{$A^4$}};
     \node at (2.5,1.47) {\tiny{$AEA^{-3}$}};
     \node at (0.6,1.47) {\tiny{$A^{2}E$}};
\end{scope}
\draw[color=blue!50, thick, ->] (2.7,0.53) arc (0:180:0.73);
\draw[color=red!60, thick, <-] (1.7,0.53) arc (0:180:0.73);
\node[below left= 1mm of {(-0.2,0)}] {\tiny{$-1$}};
\node[below left= 1mm of {(0.8,0)}] {\tiny{$1$}};
\node[below left= 1mm of {(1.8,0)}] {\tiny{$3$}};
\node[below left= 1mm of {(2.8,0)}] {\tiny{$5$}};
\node[below left= 1mm of {(3.8,0)}] {\tiny{$7$}};
\end{tikzpicture} 
 \caption{\emph{Fundamental domain for $K_1$}}
   \label{F:Triangle_2}
\end{figure}
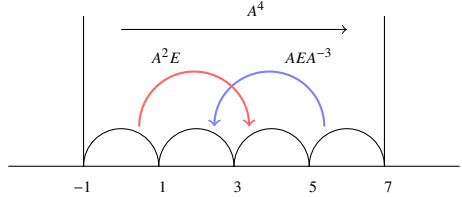
\end{center}
 
For each $n\in\mathbb{N}$, we define the subgroup 
\[
K_{n}=\langle A^{4n},\ A^{4l}A^2E A^{-4l},\ A^{4l} AEA^{-3} A^{-4l}:\ 1-n \leq l \leq n-1\rangle.
\]
\begin{remark}\label{r:geodesic_polygon}
From the definition above, the group $K_{n}$ is isomorphic to  
$\mathbb{Z}\ast \stackrel{4n-1}{\ldots} \ast \mathbb{Z}$, and has index $2n-1$ in $K_{1}$. A fundamental domain for $K_n$ is the geodesic polygon with $8n-2$ sides whose vertices are the points $\infty$ and the $2l-1$, with $-4(n-1) \leq l\leq 4n$ (see {\rm Figure \ref{F:Triangle_n}}), and the quotient orbifold  $\mathbb{H}/ K_n$ is homeomorphic to a surface of genus $2n-1$ with two punctures.

\begin{figure}[h!]
\begin{center}
\begin{tikzpicture}[baseline=(current bounding box.north)]

\begin{scope}
    \clip (-5.5,0) rectangle (7.5,1.8);
    \draw (0,0) circle(0.25);
     \draw (0.5,0) circle(0.25);
     \draw (1.0,0) circle(0.25);
     \draw (1.5,0) circle(0.25);
     \draw[dashed] (-0.25,0) -- (-0.25,2);
     \draw[dashed] (1.75,0) -- (1.7,2);
    \draw[dashed] (-5.5,0) -- (7.0,0);
      \draw (2.0,0) circle(0.25);
    \draw (2.5,0) circle(0.25);
     \draw (3.0,0) circle(0.25);
      \draw (3.5,0) circle(0.25);
      \draw[dashed] (3.75,0) -- (3.75,2);
        \node at (4.25,1.0) {$\cdots$};
      \draw[dashed] (4.7,0) -- (4.7,2);
        \draw (4.95,0) circle(0.25);
         \draw (5.45,0) circle(0.25);
          \draw (5.95,0) circle(0.25);
           \draw (6.45,0) circle(0.25);
           \draw (6.7,0) -- (6.7,2);
       
        \draw (-0.5,0) circle(0.25);
         \draw (-1.0,0) circle(0.25);
          \draw (-1.5,0) circle(0.25);
           \draw (-2.0,0) circle(0.25);
            \draw[dashed] (-2.25,0) -- (-2.25,2);
             \node at (-2.7,1.0) {$\cdots$};
            \draw[dashed] (-3.2,0) -- (-3.2,2);
            \draw (-3.45,0) circle(0.25);
            \draw (-3.95,0) circle(0.25);
            \draw (-4.45,0) circle(0.25);
            \draw (-4.95,0) circle(0.25);
            \draw (-5.20,0) -- (-5.20,2);                            
\end{scope}
\draw[color=blue!50, thick, ->] (1.4,0.3) arc (0:180:0.4);
\draw[color=red!60, thick, <-] (0.9,0.3) arc (0:180:0.4);
\draw[color=blue!50, thick, ->] (3.4,0.3) arc (0:180:0.4);
\draw[color=red!60, thick, <-] (2.9,0.3) arc (0:180:0.4);
\draw[color=blue!50, thick, ->] (-0.6,0.3) arc (0:180:0.4);
\draw[color=red!60, thick, <-] (-1.1,0.3) arc (0:180:0.4);
\draw[color=blue!50, thick, ->] (6.4,0.3) arc (0:180:0.4);
\draw[color=red!60, thick, <-] (5.9,0.3) arc (0:180:0.4);
\draw[color=blue!50, thick, ->] (-3.5,0.3) arc (0:180:0.4);
\draw[color=red!60, thick, <-] (-4.0,0.3) arc (0:180:0.4);
\node[below left= 1mm of {(0,0)}] {\tiny{$-1$}};
\node[below left= 1mm of {(2.1,0)}] {\tiny{$7$}};
\node[below left= 1mm of {(4.2,0)}] {\tiny{$7+8$}};
\node[below left= 1mm of {(7.5,0)}] {\tiny{$7+(n-1)\cdot8$}};
\node[below left= 1mm of {(-1.8,0)}] {\tiny{$-1-8$}};
\node[below left= 1mm of {(-4.3,0)}] {\tiny{$-1-(n-1)\cdot 8$}};
\end{tikzpicture} 

 \caption{\emph{Fundamental domain for $K_n$}}
   \label{F:Triangle_n}
\end{center}
\end{figure}
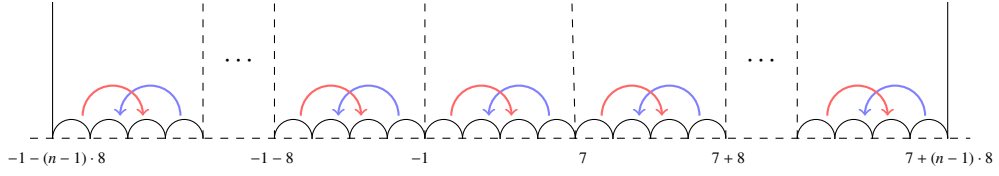
\end{remark}

The group 
$$K_{\infty}=\langle A^{4l} A^{2}E A^{-4l}, A^{4l} AEA^{-3} A^{-4l}: l \in {\mathbb Z}\rangle$$
is an infinite index subgroup of $K_{0}$, which is normalized by $A^{4}$, and such that the quotient orbifold
\begin{equation}
S={\mathbb H}/K_{\infty}
\end{equation}
is topologically equivalent to the Loch Ness monster (see \cite{AR2020}*{Theorem 1.3}).

\subsection{Riemann surface structures coming from affine plane curves on the Loch Ness monster}

Let $U$ be a non-empty connected open subset of $\mathbb{C}^{2}$, and let $F:U \subset {\mathbb C}^{2} \to {\mathbb C}$ be a non-constant holomorphic map. For each point $p\in F(U)$, the set $S_{F}(p):=\{(z,w) \in U: F(z,w)=p\}$ is called {\it affine plane curve}. When $U={\mathbb C}^{2}$ and $F$ is a polynomial map, then usually we talk of an {\it algebraic set}.

\begin{remark}
The affine plane curve $S_{F}(p)$ is a closed subset of $U$, because this is the inverse image of $p$ under the continuous map $F$.
\end{remark}

If $p$ is a regular value for $F$, then, by the Implicit Function Theorem (see \emph{e.g.}, \cite{KraPa}, \cite{Miranda}*{p. 10, Theorem 2.1}),  the affine plane curve $S_{F}(p)$ is a Riemann surface.

\begin{example}
Let $F:{\mathbb C}^{2} \to {\mathbb C}$ be the holomorphic map  given by $F(z,w)=z e^{w}$. As $\frac{\partial F}{\partial z} (z,w) = e^{w}\neq 0$, we obtain that  $S_{F}(1)=\{(z,w): ze^w=1\}$ is a Riemann surface.

\end{example}

\subsubsection{\bf Infinite superelliptic curves}
Let us consider a sequence of different complex numbers $(z_k)_{k\in\mathbb{N}}$, such that $\lim\limits_{k\to\infty}|z_k|=\infty$, and a sequence $(m_{k})_{k\in\mathbb{N}}$  of positive integers. 
 By the Weierstrass theorem (see \emph{e.g.}, \cite{Palka}*{p. 498}) there exists a meromorphic function $f:{\mathbb C} \to \widehat{\mathbb C}$ whose zeroes are given by the points $z_1,z_2,\ldots$, each $z_{k}$ of order $m_k$. Moreover, $f$ is uniquely determined (up multiplication) by  a zero-free entire map (for example $e^{z}$). Such functions $f$ admit the representation 
\begin{equation}\label{eq:T_Weierstrass}
f(z)=g(z)z^{m_{0}}\prod_{k=1, z_{k} \neq 0}^{\infty}\left(1-\frac{z}{z_k}\right)^{m_k}E_k(z),
\end{equation}
where $g$ is a zero-free entire function ($m_{0}=0$ if $z_{k} \neq 0$ for every $k\in\mathbb{N}$; in the other case, $m_{0}$ is the corresponding $m_{k}$ for $z_{k}=0$), 
and $E_{k}(z)$ is a function of the form
\[
E_{k}(z)=\exp \left[\sum_{s=1}^{d(k)}\frac{1}{s}\left(\frac{z}{z_{k}}\right)^{s} \right],
\]
for a suitably large non-negative integer $d(k)$.

Now, if we consider the holomorphic function $F:{\mathbb C}^{2} \to {\mathbb C}$ given by $F(z,w)=w^{n}-f(z)$, with $n \geq 2$ integer, then we obtain the affine plane curve 
\[
S(f)=\{(z,w) \in {\mathbb C}^{2}: w^{n}=f(z)\}.
\]
If $m_{k}=1$ for all $k\in\mathbb{N}$ (see the equation  (\ref{eq:T_Weierstrass})), then such affine curve $S(f)$ is a Riemann surface, called {\it infinite superelliptic curve}. If moreover, $n=2$, the affine curve $S(f)$ is known as  \emph{infinite hyperelliptic curve}.

\begin{remark}\label{r:covering}
The projection map $\pi_z:S(f)\to \mathbb{C}$, $(z, w)\mapsto z$, satisfies the following properties.
\begin{enumerate}
\item It is a branched covering map whose branch points are given by the sequence $(z_k)_{k\in\mathbb{N}}$. 

\item For each $z\in \mathbb{C}\setminus\{z_k:k\in\mathbb{N}\}$, the fiber $\pi_{z}^{-1}(z)$ consists of $n$ elements.

\item It is a proper map, that is, the inverse image of any compact subset $K$ of $\mathbb{C}$ is also a compact subset of $S(f)$ (because $\pi_{z}^{-1}(K)$ is a closed subset of the compact $K\times \left(h^{-1} [ f(K)] \right)$, where $h$ is the complex  $n$-power map $h(w)=w^n$).
\end{enumerate}
\end{remark}

The following result describes the topology type of an infinite superelliptic curve. 

\begin{theorem}\label{t:infinite_hyperelliptic_curve}
 The infinite superelliptic curve $S(f)$ is a connected Riemann surface homeomorphic to the Loch Ness monster.
\end{theorem}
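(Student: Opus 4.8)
The plan is to establish two things about $S(f)$: first that it is connected, and second that it is homeomorphic to the Loch Ness monster, i.e.\ that it has infinite genus and exactly one end. The main tool throughout will be the projection $\pi_z:S(f)\to\mathbb{C}$ described in Remark \ref{r:covering}, which is a proper branched covering of degree $n$ branched exactly over the sequence $(z_k)_{k\in\mathbb{N}}$, each branch point simple since $m_k=1$.

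For \emph{connectedness}, I would argue that $S(f)$ cannot split into several sheets globally. Since every branch point $z_k$ is simple (order $m_k=1$), the local monodromy around $z_k$ is an $n$-cycle in the symmetric group on the $n$ sheets; restricting to $w^n=f(z)$ near a simple zero, the covering looks like $w\mapsto w^n$, which permutes all $n$ preimages cyclically. A single nontrivial transposition-type cycle already glues sheets together, and the presence of infinitely many branch points, each contributing a full $n$-cycle, forces the monodromy group acting on the fiber to be transitive; hence $\pi_z^{-1}(\mathbb{C}\setminus\{z_k\})$ is connected, and since removing the discrete branch locus cannot disconnect the closure, $S(f)$ itself is connected. (For a cleaner statement I would note that the complement $\mathbb{C}\setminus\{z_k\}$ has free fundamental group with a loop $\gamma_k$ around each $z_k$, and each $\gamma_k$ maps to an $n$-cycle, so the image of monodromy already contains a transitive subgroup.)

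For the \emph{topological type}, I would use Kér\'ekj\'art\'o's classification (Theorem in \S2): it suffices to show $S(f)$ has infinite genus and exactly one end. The end count is where properness of $\pi_z$ does the work. Exhaust $\mathbb{C}$ by large closed disks $D_R=\{|z|\le R\}$; since $\lim_k|z_k|=\infty$, each $D_R$ contains only finitely many branch points, and $\pi_z^{-1}(D_R)$ is compact by Remark \ref{r:covering}(3). For $R$ large the complement $\mathbb{C}\setminus D_R$ is an annular neighborhood of $\infty$ containing infinitely many (all but finitely many) simple branch points; I claim its preimage $\pi_z^{-1}(\mathbb{C}\setminus D_R)$ is connected, which by the Remark characterization of one end (``$X\setminus K'$ connected'') gives exactly one end. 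Connectedness of this preimage again follows from transitivity of monodromy: the loop around $\infty$ in $\mathbb{C}\setminus D_R$ is homotopic to the product of loops around the infinitely many enclosed $z_k$, and the enclosed simple branch points already generate a transitive monodromy on the $n$ sheets over the annulus, so the sheets remain glued into a single connected piece. This is the step I expect to be the main obstacle, since one must verify that \emph{truncating} to the exterior region $\mathbb{C}\setminus D_R$ does not disconnect the cover even though the loop at infinity may be monodromy-trivial; the resolution is that the interior simple branch points enclosed by $\partial D_R$ pair up the sheets, so the boundary circle $\partial D_R$ has connected preimage and the exterior stays connected.

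Finally, for \emph{infinite genus}, I would compute genus locally and let it accumulate. Pick a large disk containing exactly $m$ of the branch points $z_1,\dots,z_m$ (all simple); the restriction of $\pi_z$ to the preimage of this disk is a degree-$n$ branched cover of a disk with $m$ simple branch points, and the Riemann--Hurwitz formula gives its Euler characteristic, hence a reduced genus growing (at least linearly) with $m$ once $n\ge 2$. Because $\lim_k|z_k|=\infty$ provides infinitely many branch points, these compact subsurfaces have unbounded genus, so $g(S(f))=\sup_Y g(Y)=\infty$. Combining infinite genus with the single end, the classification theorem identifies $S(f)$ with the Loch Ness monster, completing the proof.
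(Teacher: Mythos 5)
Your proposal is correct, but it takes a genuinely different route from the paper's proof. Where you argue via monodromy, the paper works with explicit paths: for connectedness it chooses a simple arc $\gamma$ in $\mathbb{C}$ passing through all the $z_k$ and shows that the ``spine'' $\pi_z^{-1}(\gamma[0,\infty))$ is closed and connected, then lifts paths in $\mathbb{C}\setminus\{z_k\}$ to join any point of $S(f)$ to the spine; for the one-end property it takes, exactly as you do, $K'=\pi_z^{-1}\left(\overline{B_r(\mathbf{0})}\right)$ (compact by properness of $\pi_z$), but proves connectedness of the complement by lifting paths to the tail $\pi_z^{-1}(\gamma[x_N,\infty))$ of the spine rather than by transitivity of monodromy; and for infinite genus it does not apply Riemann--Hurwitz to disk preimages directly, but instead embeds into $S(f)$, by an explicit sheet-by-sheet lifting construction, subsurfaces of the compact superelliptic curves $v^n=\prod_{k=1}^{dn}(u-z_k)$, whose genus $\frac{n}{2}(d(n-1)-2)+1$ is computed by Riemann--Hurwitz on the compact curve. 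Your approach is shorter and more conceptual: the single observation that each simple zero ($m_k=1$) produces an $n$-cycle, hence transitive monodromy, simultaneously yields connectedness of $S(f)$, connectedness of the exterior preimages $\pi_z^{-1}(\mathbb{C}\setminus D_R)$, and (with Riemann--Hurwitz on $\pi_z^{-1}(D_R)$, whose Euler characteristic $n-m(n-1)$ tends to $-\infty$ while the number of boundary circles stays at most $n$) unbounded genus. The paper's construction is more elementary, using only path lifting, and its embedded algebraic pieces exhibit the genus concretely. One caveat in your end-count step: the claim that the interior branch points force $\partial D_R$ to have \emph{connected} preimage is not justified --- the monodromy of $\partial D_R$ is the product of the interior $n$-cycles and can be trivial (e.g.\ $n=2$ with an even number of enclosed branch points), in which case $\pi_z^{-1}(\partial D_R)$ has $n$ components. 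Fortunately this claim is not needed: since $|z_k|\to\infty$, infinitely many branch points lie in $\mathbb{C}\setminus D_R$, and the loops around them already generate a transitive monodromy over the exterior, which is exactly what makes $S(f)\setminus K'$ connected.
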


\begin{proof}
First, we shall prove that $S(f)$ is path-connected. The sequence $(z_k)_{k\in\mathbb{N}}$ is ordered as follows: (i) $ |z_k|< |z_{k+1}|$ and, (ii) if $|z_k|=|z_{k+1}|$, then $0\leq \arg(z_k)<\arg(z_{k+1})<2\pi$. As the complex plane $\mathbb{C}$ is path-connected, we can consider a simple smooth arc 
\begin{equation}\label{eq:path_on_complex_plane}
\gamma:[0,\infty)\to\mathbb{C},    
\end{equation} 
such that there are real numbers $0=x_1<x_2<\ldots$ satisfying $\gamma(x_k)=z_k$, for each $k\in\mathbb{N}$. The inverse image $G(\gamma):=\pi_{z}^{-1}(\gamma[0,\infty))$ is called {\it a  spine of} $S(f)$ {\it associated to $\gamma$}. From Remark \ref{r:covering} we claim that the spine $G(\gamma)$ is a closed and connected subset of $S(f)$. 
 
Given that $\pi_z: S(f)\to \mathbb{C}$ is a branched covering (Remark \ref{r:covering} (1) and (2)), if we take a point $(z_0,w_0)\in S(f)$ and a path $\beta$ in $\mathbb{C}\setminus \{z_k:k\in\mathbb{N}\}$, such that one of its end points is $z_0$ and the other one end point belongs to $\gamma[0,\infty)$, then there exists  a lifting path $\tilde{\beta}$ of $\beta$ on $S(f)$, such that one of its end points is $(z_0,w_0)$ and the other end point belongs to the spine $G(\gamma)$. This shows that $S(f)$ is path-connected. 

In order to prove that $S(f)$ is homeomorphic to the Loch Ness monster, we must prove that $S(f)$ has only one end and infinite genus.

\emph{The unique end of $S(f)$.} Given a compact subset $K\subset S(f)$ we shall prove that there is a compact subset $K^{'}\subset S(f)$ such that $K\subset K^{'}$ and $S(f)\setminus K^{'}$ is connected. The image $\pi_{z}(K)$ is a compact subset of the complex plane $\mathbb{C}$. As the complex plane $\mathbb{C}$ has only one end, then there exists a real number $r>0$, such that the closed ball $\overline{B_r(\textbf{0})}$ contains to the compact $\pi_{z}(K)$ and $\mathbb{C}\setminus \overline{B_{r}(\textbf{0})}$ is connected. We can suppose without loss of generality that there exists $N\in\mathbb{N}$ such that the intersection $\overline{B_{r}(\textbf{0})}\cap \{z_k :k\in\mathbb{N}\}=\{z_1,\ldots, z_{N-1}\}$. Moreover, the intersection $\overline{B_r(\textbf{0})}\cap \gamma[x_N,\infty)=\emptyset$ (see equation (\ref{eq:path_on_complex_plane})). The inverse image $G:=\pi_{z}^{-1}(\gamma[x_N,\infty))$ is a closed connected subset of the spine $G(\gamma)$, and given that the projection map $\pi_z$ is a proper map (Remark \ref{r:covering} (3)), the inverse image $K^{'}:=\pi_{z}^{-1}\left(\overline{B_r(\textbf{0})}\right)$ is a compact subset of $S(f)$. Moreover, $K\subset K^{'}$ and $G\cap K^{'}=\emptyset$. Using the same ideas described in the proof of the path-connected of $S(f)$, for each point $(z_0,w_0)\in S(f)\setminus K^{'}$ we can found a path $\beta$ in $S(f)\setminus K^{'}$ having as one of its end points $(z_0,w_0)$ and the other end point belongs to $G$. Hence, $S(f)\setminus K^{'}$ is path-connected. Thus, we conclude that the surface $S(f)$ has only one end.

\emph{The surface $S(f)$ has infinite genus.} For each $d\in\mathbb{N}$, let $S_{d}$ be the compact Riemann surface associated to the algebraic curve
\[
v^n=\prod_{k=1}^{dn}(u-z_k).
\]
By the Riemann-Hurwitz formula,
the compact Riemann surface $S_{d}$ has genus $g=\frac{n}{2}(d(n-1)-2)+1$.  

As the complex plane $\mathbb{C}$ is an $\sigma$-compact space, we can take an increasing sequence of compact connected subsets $K_1\subset K_2\subset \ldots$ such that $\mathbb{C}=\bigcup\limits_{d\in\mathbb{N}}K_{d}$, and for each $d\in\mathbb{N}$, $K_{d}\cap \{z_k:k\in\mathbb{N}\}= \{z_1,\ldots,z_{dn}\}$. By the Riemann-Hurwitz formula the subsurface 
\[
S_{d}(K_{d}):=\left\{(u,v)\in\mathbb{C}^2: v^n=\prod_{k=1}^{dn}(u-z_k),  u\in K_{d}\right\}\subset S_{d}, 
\]
is homeomorphic to a $g$-torus with one boundary. 

Now, we define an embedding map $\varphi$ from $S_{d}(K_{d})$ to $S(f)$ using the projection maps $\pi_{u}:S_{d}(K_{d}) \to K_{d}$  and $\pi_z: S(f)\to \mathbb{C}$. We fix a point $\tilde{z}\in K_{d}-\{z_1,\ldots,z_{nd}\}$, then there are different points $p_1,\ldots,p_n$ in the fiber  $\pi_{u}^{-1}\left(\tilde{z}\right)\subset S_{d}(K_{d})$. Similarly, there are different points $q_1,\ldots,q_n$ in the fiber $\pi_{z}^{-1}(\tilde{z})$. Then we define 
\[
\varphi(p_i)=q_i, \text{ for each } i\in \{1,\ldots,n\}.
\]
For each point $s$ in  $S_{d}(K_{d})$, we take a path $\gamma$ in $K_{d}-\{z_1,\ldots,z_{nd}\}$ having ends points $\tilde{z}$ and $\pi_u(s)$. Then there exists a lifting $\gamma_i$ of $\gamma$ on $S_{d}(K_{d})$ such that its ends point are $s$ and $p_i$, for any $i\in \{1,\ldots,n\}$. Similarly, there exists a lifting $\tilde{\gamma}_i$ of $\gamma$ on $S(f)$ such that its ends points are $q_i$ and any $t$ in $S(f)$. Thus, we define $\varphi(s)=t$. Then by construction $\varphi$ is a well define injective map, and  $\pi_{z}\circ \varphi(s)=\pi_{u}(s)$, for each $s\in S_{d}(K_{d})$.

Next, we must prove that the map $\varphi$ is continuous. Let $s$ be a point of $S_{d}(K_{d})$ and let $U$ be an open subset of $\varphi(s)\in S(f)$. As the projection map $\pi_{z}$ is an open map, then $\pi_z(U)$ is an open subset of the complex plane $\mathbb{C}$  containing the point $\pi_{u}(s)$. By the continuously of $\pi_u$ there exists an open $V$ of $s$ such that $\pi_{u}(V)\subset \pi_{z}(U)\cap K_{d}$.  It is easily shown that $\varphi(V)\subset U$. The map $\varphi$ is closed because is a continuous map from a compact space into a Hausdorff space (see \cite{Dugu}*{Theorem 2.1, p. 226}). Hence $f$ is an embedding. This fact implies that for each $d\in \mathbb{N}$, there exists a subsurface of $S(f)$ having genus $g=\frac{n}{2}(d(n-1)-2)+1$.
\end{proof}

Let us observe that the surface $S(f)$ admits the holomorphic automorphism 
$$\varphi_{f}:S(f) \to S(f): (z,w) \mapsto (z,e^{2 \pi i/n}w).$$ 
The holomorphic branched covering map $\pi_{z}:S(f) \to {\mathbb C}$ is Galois with deck group $G_{f}:=\langle \varphi_{f} \rangle \cong {\mathbb Z}_{n}$.

Now, we consider $(z_k)_{k\in\mathbb{N}}$ and $(z'_k)_{k\in\mathbb{N}}$ sequences of complex number such that  $\lim\limits_{k\to \infty}|z_k|=\infty=\lim\limits_{k\to \infty}|z'_k|$. Let $f$ and $g$ be the entire maps from Weierstrass's theorem having as simple zeros the points $z_1,z_2,\ldots$ and $z'_1,z'_2,\ldots$, respectively.
\begin{theorem}
If $n=2$, i.e., for the hyperelliptic case,
the pairs $(S(f),G_{f})$ and $(S(g),G_{g})$ are biholomorphically equivalent if and only if there exists a  holomorphic automorphism of complex plane $\mathbb{C}$ carrying the zeros of $f$ onto the zeros of $g$.
\end{theorem}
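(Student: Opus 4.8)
The plan is to recast ``equivalence of pairs'' in group-theoretic terms and then exploit the two canonical objects attached to each curve: the quotient $S(f)/G_f$ and the fixed-point set of the involution. Since $G_f=\langle\varphi_f\rangle\cong\mathbb{Z}_2$ and likewise for $G_g$, a biholomorphism $\Phi:S(f)\to S(g)$ realizes an equivalence of pairs precisely when it conjugates the two involutions, i.e. $\Phi\circ\varphi_f=\varphi_g\circ\Phi$. The key structural observation is that, as recorded above, $\pi_z:S(f)\to\mathbb{C}$, $(z,w)\mapsto z$, is Galois with deck group $G_f$, so it is exactly the quotient map $S(f)\to S(f)/G_f\cong\mathbb{C}$; moreover the fixed points of $\varphi_f$ are the points $(z_k,0)$, which $\pi_z$ sends onto the zero set $\{z_k\}$ of $f$ (the branch locus). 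The same holds for $g$.

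For the forward implication, given such a $\Phi$, I would push it down through the quotient maps: because $\Phi$ sends $G_f$-orbits to $G_g$-orbits, it descends to a continuous bijection $T:\mathbb{C}\to\mathbb{C}$ with $T\circ\pi_z=\pi_z\circ\Phi$, holomorphic off the branch values and hence everywhere by removability. Every holomorphic automorphism of $\mathbb{C}$ is affine, so $T(z)=az+b$ with $a\neq0$ is the desired automorphism. To see that it carries zeros to zeros, note that $\Phi$ maps the fixed-point set of $\varphi_f$ bijectively onto that of $\varphi_g$ (conjugacy of the involutions); projecting this identity by $\pi_z$ gives $T(\{z_k\})=\{z'_j\}$.

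For the converse, I would factor the construction through the pulled-back function $g\circ T$. First, lifting $T$ gives a biholomorphism $\Psi:S(g\circ T)\to S(g)$, $(z,w)\mapsto(Tz,w)$, which plainly commutes with the involutions. Second, since $T$ is an affine bijection carrying the simple zeros of $f$ onto those of $g$, the entire function $g\circ T$ has exactly the same zeros, with the same multiplicities, as $f$; hence the ratio $f/(g\circ T)$ is a zero-free entire function on $\mathbb{C}$. Because $\mathbb{C}$ is simply connected, this ratio admits a holomorphic logarithm and thus can be written as $e^{2h}$ for some entire $h$, and then $\Theta:S(f)\to S(g\circ T)$, $(z,w)\mapsto(z,e^{-h(z)}w)$, is an involution-preserving biholomorphism. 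The composite $\Phi=\Psi\circ\Theta$ conjugates $\varphi_f$ to $\varphi_g$, giving the required equivalence of pairs. The step I expect to be most delicate is this last ``twist'': it is what makes the equivariant biholomorphism type of $S(f)$ depend only on the divisor of zeros of $f$, and it relies crucially on being able to extract a holomorphic square root $e^{-h}$ of $(g\circ T)/f$ over the whole plane.
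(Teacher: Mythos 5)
Your proof is correct, but it follows a genuinely different route from the paper's. The paper works through uniformization: it identifies the orbifold $S(f)/G_{f}$ with $\mathbb{C}$ carrying order-two cone points at the $z_{k}$, uniformizes it by a Fuchsian group $\Gamma$ that is an infinite free product of elliptic elements of order two, and observes that $\Gamma$ has a \emph{unique} torsion-free index-two subgroup $\Gamma_{0}$ (which uniformizes $S(f)$); this uniqueness is what forces every biholomorphism of the orbifolds to lift to an equivariant biholomorphism of the curves, and the paper then dismisses the other direction as clear. You avoid uniformization entirely: your forward direction spells out the descent-plus-removability argument (which is exactly the paper's ``clear'' direction, made explicit, including the identification of the zero sets via fixed points of the involutions), and your converse replaces the subgroup-uniqueness lemma by an explicit construction, factoring through $S(g\circ T)$ and twisting by $e^{-h}$ where $e^{2h}=f/(g\circ T)$, which is legitimate because the ratio is zero-free entire on the simply connected plane. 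Two remarks on what each approach buys. The paper's argument is short modulo the uniformization machinery, and it isolates where the hyperelliptic hypothesis enters in their framing: uniqueness of the torsion-free \emph{index-two} subgroup. Your construction is elementary and fully explicit — the equivariant biholomorphism is $(z,w)\mapsto\left(Tz,\,e^{-h(z)}w\right)$ — and, notably, it does not actually use $n=2$: writing the zero-free ratio as $e^{nh}$ makes the same twist work for every $n\geq 2$, so your method proves the superelliptic analogue as well, something the paper's uniqueness argument does not obviously give.
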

\begin{proof}
Note that the orbifold $S(f)/G_{f}$ is given by the complex plane $\mathbb{C}$ with conical points of order two at the values $z_{k}$. A Fuchsian group $\Gamma$ uniformizing it is an infinite free product of elliptic elements of order two. The group $\Gamma$ has a unique index two subgroup $\Gamma_{0}$ which is torsion-free. This group $\Gamma_{0}$ provides the uniformization of $S(f)$ and the branched covering $\pi_{z}:S(f) \to {\mathbb C}$ is induced by the pair $(\Gamma, \Gamma_{0})$. It follows that every biholomorphim of the orbifolds $S(f)/G_{f}$ and $S(g)/G_{g}$ lifts to an automorphism between $S(f)$ and $S(g)$ conjugating $G_{f}$ to $G_{f}$. The converse is clear.
\end{proof}

\begin{example}\label{example:7.1}
Given that the zeros of the sine and the cosine complex maps, 
\[
f(z)=\sin(\pi z)=\pi z\prod_{n=1}^{\infty}\left(1-\frac{z^2}{n^2}\right) \text{  and, }  g(z)=\cos(\pi z)=\prod_{n=1}^{\infty}\left(1-\frac{4z^2}{(2n-1)^2}\right)
\]
differ by a translation, then the infinite hyperelliptic curves 
\[
S(f)=\left\{(z,w)\in\mathbb{C}^2: w^2=f(z)\right\}
\text{ and, } S(g)=\left\{(z,w)\in\mathbb{C}^2: w^2=g(z)\right\}
\]
are biholomorphic and topologically equivalent to the Loch Ness monster.
\end{example}

\subsubsection{\bf Moduli space of infinite hyperelliptic curves}
The space $\ell^{\infty}$ conformed by all bounded sequence of complex numbers $(z_k)_{k\in\mathbb{N}}$ equipped with the norm 
\[
|(z_k)_{k\in\mathbb{N}}|_{\infty}=\sup\limits_{k\in\mathbb{N}} |z_k|<\infty
\]
is a complete space, and the set $c_0$ conformed by all sequences of complex number with limit $\textbf{0}$ is a closed subset of $\ell^{\infty}$. The set $c_{\infty}$ conformed by all sequence of complex numbers $(z_k)_{k\in\mathbb{N}}$ such that $\lim\limits_{k\to \infty}|z_n|=\infty$ can be identified with $c_0$ using the map $z\mapsto\frac{1}{z}$. So, two infinite hyperelliptic curves $S(f)$ and $S(g)$, such that the sequences $(z_k)_{k\in\mathbb{N}}$ and $(z'_k)_{k\in\mathbb{N}}$ are the simple zeros of $f$ and $g$ respectively, are biholomorphic if there exists affine map $z\mapsto az+b$ with $a,b\in \mathbb{C}$ and $a\not=\textbf{0}$ such that $z_k\mapsto z'_k$, for each $k\in\mathbb{N}$.
Then, by the above, the group $G=\{w\mapsto \frac{w}{a+bw}:a,b\in\mathbb{C}, a\neq \textbf{0}\}$ acts on $c_0$ by $(w_k)_{k\in\mathbb{N}}\mapsto \left(\frac{w_k}{a+bw_k}\right)_{k\in\mathbb{N}}$ with $w_k=\frac{1}{z_k}$. Thus, we have obtained the following fact.

\begin{theorem}
The moduli space of infinite hyperelliptic curves is given by $c_0/ G$.
\end{theorem}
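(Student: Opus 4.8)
The plan is to realise the asserted moduli space as an explicit orbit space by tracking exactly what data an infinite hyperelliptic curve depends on, and then transporting that data through the inversion $z\mapsto 1/z$. Recall that an infinite hyperelliptic curve is a surface $S(f)=\{(z,w)\in\mathbb{C}^2: w^2=f(z)\}$, where $f$ is a Weierstrass product with a prescribed sequence $(z_k)_{k\in\mathbb{N}}$ of \emph{simple} zeros satisfying $\lim_k|z_k|=\infty$. My first step would be to show that, up to biholomorphism, $S(f)$ depends only on the \emph{set} of zeros and not on the zero-free entire factor $g$ appearing in the representation \eqref{eq:T_Weierstrass}. Indeed, two admissible functions with the same zeros differ by such a factor $g=e^{h}$, and the map $(z,w)\mapsto (z,\,e^{-h(z)/2}w)$ is an explicit biholomorphism between $\{w^2=gf_0\}$ and $\{w^2=f_0\}$. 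Hence the assignment $S(f)\mapsto\{z_k\}$ is well defined on biholomorphism classes.

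Next I would translate biholomorphism of curves into affine equivalence of the zero sets. By the preceding theorem, the pairs $(S(f),G_f)$ and $(S(g),G_g)$ are biholomorphically equivalent precisely when some holomorphic automorphism of $\mathbb{C}$ carries the zeros of $f$ onto those of $g$; since the holomorphic automorphisms of $\mathbb{C}$ are exactly the affine maps $z\mapsto az+b$ with $a\neq \mathbf{0}$, this says that the two curves are equivalent iff their branch loci lie in the same orbit of the affine group $\mathrm{Aff}(\mathbb{C})$. Together with the Weierstrass theorem, which produces, for \emph{any} discrete sequence with $|z_k|\to\infty$, a function $f$ having exactly those simple zeros, this yields a bijection between the moduli space and the orbit space $c_\infty/\mathrm{Aff}(\mathbb{C})$, where $c_\infty$ is viewed as the space of admissible branch loci.

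The final step is a change of coordinates. The inversion $\iota(z)=1/z$ identifies $c_\infty$ with $c_0$ via $(z_k)\mapsto(1/z_k)$, and conjugates the affine group onto $G$: a direct computation gives $\iota\circ(az+b)\circ\iota^{-1}(w)=\dfrac{w}{a+bw}$, which is precisely the general element of $G$, and the induced coordinate-wise action $(w_k)\mapsto(w_k/(a+bw_k))$ is exactly the one stated. Consequently $c_\infty/\mathrm{Aff}(\mathbb{C})\cong c_0/G$, and chaining the identifications presents the moduli space as $c_0/G$.

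The delicate points, which is where I expect the real work to lie, are bookkeeping rather than conceptual. First, the identification $c_\infty\cong c_0$ is literal only when $\mathbf{0}$ is not a branch value, and only on the subset of $c_0$ consisting of injective sequences with nonzero terms; one must either normalise by a translation moving a chosen branch point off the origin (harmless, since translations lie in $\mathrm{Aff}(\mathbb{C})$ and so do not affect the orbit) or work in $\widehat{\mathbb{C}}$ and check that the point at infinity plays no role. Second, a curve determines only the \emph{set} of zeros, whereas $c_0$ consists of sequences; to obtain a genuine point of $c_0$ one fixes the canonical ordering introduced earlier (by increasing modulus, then increasing argument) and must verify that passing to $G$-orbits absorbs every reordering induced by an affine change of variable. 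Finally, identifying moduli of curves with moduli of the pairs $(S(f),G_f)$ requires that the hyperelliptic involution be intrinsic; this is supplied by the Fuchsian description used in the previous proof, where $S(f)=\mathbb{H}/\Gamma_0$ with $\Gamma_0$ the unique index-two torsion-free subgroup of the uniformising group $\Gamma$ of the quotient orbifold, so that $G_f=\Gamma/\Gamma_0$ is canonically attached to $S(f)$.
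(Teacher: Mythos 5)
Your main line is exactly the paper's: the paper likewise invokes the preceding theorem (the pairs $(S(f),G_f)$ and $(S(g),G_g)$ are biholomorphically equivalent iff an automorphism of $\mathbb{C}$, necessarily affine $z\mapsto az+b$, carries the zeros of $f$ onto those of $g$), identifies $c_{\infty}$ with $c_{0}$ via $z\mapsto 1/z$, and observes that inversion conjugates the affine action into the coordinate-wise action of $G$ by $w\mapsto w/(a+bw)$. Your extra bookkeeping is correct and is genuinely missing from the paper, whose entire proof is the short paragraph preceding the statement: the independence of the biholomorphism class from the zero-free factor via $(z,w)\mapsto\bigl(z,e^{-h(z)/2}w\bigr)$, surjectivity of the branch-locus assignment via the Weierstrass theorem, and the caveat that inversion only identifies $c_{\infty}$ with the injective, nowhere-zero sequences in $c_{0}$, so that $c_{0}/G$ should really be read as the quotient of that admissible subset.

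There is, however, one genuine flaw: your final claim that the hyperelliptic involution is intrinsic to $S(f)$ because $\Gamma_{0}$ is the unique index-two torsion-free subgroup of the uniformizing group $\Gamma$ of the orbifold $S(f)/G_{f}$. That uniqueness points the wrong way: it says that $\Gamma$ determines $\Gamma_{0}$ (the orbifold determines the curve), not that $\Gamma_{0}$ determines $\Gamma$. To make $G_{f}$ canonically attached to $S(f)$ you would need to rule out that $\Gamma_{0}$ embeds with index two in two non-conjugate Fuchsian groups of the relevant type --- equivalently, that the degree-two projection $\pi_{z}:S(f)\to\mathbb{C}$ is unique up to automorphisms of $\mathbb{C}$. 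That is precisely the open question with which the paper ends, so it cannot be ``supplied by'' the previous proof. The honest formulation, for your argument and for the paper's alike, is that $c_{0}/G$ parametrizes the pairs $(S(f),G_{f})$, i.e.\ curves equipped with their hyperelliptic structure; promoting this to a moduli space of curves per se requires an affirmative answer to that open question.
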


In the case of hyperelliptic curves $S(f)$ of finite genus $g\geq 2$, the projection map $\pi_z$ is unique up to fractional linear transformations \cite[p. 95]{Far}. In the case of infinite hyperelliptic curve, it is natural to ask.

\begin{question}[Open]
Is  the projection map $\pi_z$ unique up to automorphism of $\mathbb{C}$?
\end{question}

\section*{Acknowledgements}

Camilo Ram\'irez Maluendas was partially supported by UNIVERSIDAD NACIONAL DE COLOMBIA, SEDE MANIZALES. He has dedicated this work to his beautiful family: Marbella and Emilio, in appreciation of their love and support.


\end{document}